\documentclass[3p,times]{elsarticle}

 \usepackage{lipsum}
\usepackage{color}
\usepackage{amsfonts}
\usepackage[leqno]{amsmath}
\usepackage{amsthm}
\usepackage{graphicx}
\usepackage{epstopdf}
\usepackage{algorithm}
\usepackage[algo2e]{algorithm2e}
\usepackage{subcaption,caption}
\usepackage{float} 
\usepackage{multirow}
\usepackage{footmisc} 
\usepackage{lineno}
\RequirePackage{xcolor}[2.11]
\usepackage[colorlinks=true, linkcolor=cyan,
    		   citecolor=red!60!black, urlcolor=cyan]{hyperref}
\numberwithin{equation}{section}





    \newtheorem{theorem}{Theorem}[section]
    \newtheorem{definition}{Definition}[section]    
    \newtheorem{lemma}{Lemma}[section]
    \newtheorem{example}{Example}[section]
    \newtheorem{remark}{Remark}[section]
	\newtheorem{corollary}{Corollary}[section]


\graphicspath{ {./Figure/} }

\def\R{\mathbb{R}}
\def\bx{{\bf x}}

\def\bz{{\bf z}}
\def\bu{{\bf u}}

\def\bv{{\bf v}}
\def\bw{{\bf w}}
\def\bb{{\bf b }}
\def\ba{{\bf a }}

\def\A{{\bf A }}
\def\H{{\bf H }}
\def\be{{\boldsymbol\eta }}
\def\bg{{\boldsymbol g }}
\def\supp{{\rm supp }}

 \textwidth 6.5in 
\textheight 10in 
\topmargin -1.0in
\linespread{1.2} 
\parskip 0mm 

\begin{document}

\begin{frontmatter}




\title{ Gradient Projection Newton Pursuit for Sparsity Constrained Optimization}


\author{Shenglong Zhou}

\address{Department of EEE, Imperial College London,  United Kingdom. (Email: shenglong.zhou@imperial.ac.uk)\vspace{-5mm}}
 
\begin{abstract}
Hard-thresholding-based algorithms have seen various advantages for sparse optimization in controlling the sparsity and allowing for fast computation. Recent research shows that when techniques of the Newton-type methods are integrated, their numerical performance can be improved surprisingly. This paper develops a gradient projection Newton pursuit algorithm that mainly adopts the hard-thresholding operator and employs the Newton pursuit only when certain conditions are satisfied. The proposed algorithm is capable of converging globally and quadratically under the standard assumptions. When it comes to compressive sensing problems, the imposed assumptions are much weaker than those for many state-of-the-art algorithms. Moreover, extensive numerical experiments have demonstrated its high performance in comparison with the other leading solvers.
\end{abstract}

\begin{keyword}
gradient projection Newton pursuit \sep  sparsity constrained optimization \sep  global and quadratic convergence



\end{keyword}

\end{frontmatter}


\section{Introduction}
This paper focuses on the following sparsity constrained optimization (SCO):
\begin{equation}
\min_{\bx}~f(\bx), ~~{\rm s.t.}~ \|\bx\|_{0}\leq s,\tag{SCO}\label{sco} 
\end{equation} 
where $f:\R^n\mapsto\R$ is twice continuously differentiable and bounded from below and $\|\bx\|_0$ is the zero norm counting the number of nonzero entries of $\bx$,  and $s \ll n$ is a given integer.   A typical example of \eqref{sco} is compressive sensing (CS) which has shown revolutionary advances both in  theory and algorithms over the last few decades. We refer to ground-breaking papers  \cite{candes2005decoding, donoho2006compressed, candes2006robust},   surveys \cite{bruckstein2009sparse,rauhut2010compressive}, and  monographs  \cite{eldar2012compressed,foucart2017mathematical,parikh2014proximal} for more information.  Other examples of SCO include sparse logistic regression \cite{bahmani2013greedy, wangaextended}, quadratic compressive sensing \cite{shechtman2011sparsity,Beck13}, and phase retrieval \cite{shechtman2014gespar}.

\subsection{Hard-thresholding based algorithms}

There is an impressive body of work on developing numerical algorithms to solve SCO problems. We opt to conduct a bit of technical review on a small number of papers based on the hard-thresholding principle that directly motivate our research. Those reviewed papers more or less suggest the algorithmic framework described in Algorithm \ref{algorithm 0},  
where ${\rm supp}(\bx)$ is the support set of $\bx$, namely the set of indices of its nonzero elements and $ \Pi_{s}(\cdot)$ is the so-called hard-thresholding operator defined by  \begin{eqnarray}\label{htp-operator}
\Pi_{{s} }(\bx):= {\rm argmin}\{\|\bw-\bx\|: \|\bw\|_0 \leq {s} \},
\end{eqnarray}
where $\|\cdot\|$ is the Euclidean norm.  The operator can be derived by keeping the $s$ largest elements in the magnitude of $\bx$ and setting the remaining to zeros.  We note that problem \eqref{htp-operator} may have multiple solutions. As a result, operator $\Pi_{{s} }(\bx)$ is a set that may contain multiple elements as well.   Here,  $\phi(\bx^k)=\nabla f(\bx^k)$ or $\bx^k+ \mu \nabla f(\bx^k)$  with $\mu>0$ and  $T_k=\supp(\bu^{k+1})$ or $\supp(\bu^{k+1}) \cup \supp(\bx^k)$. Their choices for different algorithms  can be found in Table \ref{algs-framework}, where we present seven algorithms whose frameworks fall into Algorithm \ref{algorithm 0}.  For instance, for  CS problems, IHT \cite{blumensath2009iterative} takes the hard-thresholding operator as the next point directly (i.e., $\bx^{k+1}=\bu^{k+1}$). HTP \cite{foucart2011hard} first calculates  the hard-thresholding operator to derive $\bu^{k+1}$  and then updates  next point $\bx^{k+1}=\bv^{k+1}$ via solving Step 2 on a subspace decided by  $\supp(\bu^{k+1})$. CoSaMP and SP share similar patterns that perform all the three steps, where $T_k$ is chosen to be the union of the support sets of the previous point (i.e. $\bx^{k}$) and a point (i.e. $\bu^{k+1}$) from the hard-thresholding operator. In Step 3, they prune the point so as to ensure point $\bz^{k+1}$ to be $s$-sparse, namely, $\|\bz^{k+1}\|_0\leq s$. When it comes to general SCO problems,  algorithms including IHT \cite{Beck13}, GraHTP  \cite{yuan2017gradient} and  GraSP  \cite{bahmani2013greedy} can be reduced to the frameworks of IHT, HTP  and CoSaMP, respectively. 
 
 \begin{algorithm}[!th]
\SetAlgoLined
 Initialize $\bx^{0}$, set $k\Leftarrow0$.\\
 \While{{The halting condition is not met}}{
$
 \arraycolsep=1.4pt\def\arraystretch{1.25}
\begin{array}{llll}
\emph{Step 1. Hard-thresholding:} &\bu^{k+1} \in \Pi_{r}( \phi(\bx^k)),\\
\emph{Step 2. Debiasing:} &\bv^{k+1}\in{\rm argmin}_{\bx}\{f(\bx): \supp(\bx)\subseteq T_k \}, \\
\emph{Step 3. Pruning:} & \bz^{k+1}\in\Pi_{s}(\bv^{k+1}).\\  
\end{array}
$\\
Update $\bx^{k+1}\in\{\bu^{k+1}, \bv^{k+1}, \bz^{k+1}\}$ and set $k:= k+1$.
 }
 Output the solution $\bx^k.$
 \caption{Hard-thresholding-based algorithms \label{algorithm 0}}
\end{algorithm}
  
\begin{table}[H]
	\renewcommand{\arraystretch}{1.0}\addtolength{\tabcolsep}{9pt}
	\caption{Descriptions of hard-thresholding-based algorithms.}\vspace{-2mm}
	\label{algs-framework}
	\begin{center}
		\begin{tabular}{lccccccccc}
			\hline
			Algs. & Ref. & $\phi (\bx^k)$ & $r$ & $T_k$& Steps & $\bx^{k+1}$  \\\hline
\multicolumn{7}{c}{CS problems}\\\hline
IHT& \cite{blumensath2009iterative}& \multirow{1}{*}{$\bx^k+ \mu \nabla f_{cs}(\bx^k)$} &  \multirow{1}{*}{ $ s$}& &\multirow{1}{*}{1} &\multirow{1}{*}{$\bu^{k+1}$} \\
HTP& \cite{foucart2011hard}& {$\bx^k+ \mu \nabla f_{cs}(\bx^k)$}&  { $s$}& $\supp(\bu^{k+1})$  &  1, 2& {$\bv^{k+1}$} \\
CoSaMP& \cite{needell2009cosamp}&  {$\nabla f_{cs}(\bx^k)$}&  { $2s$}& $\supp(\bu^{k+1}) \cup \supp(\bx^k)$  &  1, 2 ,3& {$\bz^{k+1}$} \\

SP& \cite{dai2009subspace}&  {$\nabla f_{cs}(\bx^k)$}&  { $s$}& $\supp(\bu^{k+1}) \cup \supp(\bx^k)$  &  1, 2, 3& {$\bz^{k+1}$} \\
			\hline
\multicolumn{7}{c}{General SCO problems}\\\hline
IHT& \cite{Beck13}& {$\bx^k+ \mu \nabla f (\bx^k)$} &  { $ s$}& & {1} & {$\bu^{k+1}$} \\			
GraHTP& \cite{yuan2017gradient} & {$\bx^k+ \mu \nabla f(\bx^k)$}&  { $s$}& $\supp(\bu^{k+1})$  &  1, 2& {$\bv^{k+1}$} \\
GraSP& \cite{bahmani2013greedy}&  {$\nabla f (\bx^k)$}&  { $2s$}& $\supp(\bu^{k+1}) \cup \supp(\bx^k)$  &  1, 2 ,3& {$\bz^{k+1}$} \\\hline
		\end{tabular}
	\end{center}
\end{table}

For  CS problems,  the objective function is
\begin{equation*}
f(\bx):= f_{cs}(\bx):=({1}/{2} )\|\A\bx-\bb\|^{2},
\end{equation*} 
where $\A\in\R^{m\times n}$ and $\bb\in\R^m$ are the sensing matrix  and the observation vector. Hard-thresholding-based algorithms have been shown to converge an arbitrarily given $s$-sparse signal  $\bx^*$  under the restricted isometry property (RIP) of sensing matrix $\A$. Generally speaking, the theory ensures that the distance between each iterate to the sparse signal is bounded by the sum of two terms.  
The first term converges linearly, and the second term is a fixed approximation error that depends on the noise. We refer to literature \cite{blumensath2009iterative,needell2009cosamp,foucart2011hard,zhao2020improved} for many of such a result, which is often called statistical error-bound guarantee \cite{shen2017tight}. To guarantee the error-bound theory, there is an extensive usage of the concept of RIP of the sensing matrix associated with a restricted
isometry constant (RIC) of order $s$, denoted by $\delta_s$ \cite{candes2005decoding,candes2006near}, such as conditions $\delta_{3s}<0.177$ \cite{blumensath2009iterative}, $\delta_{3s}<0.618$ \cite{zhao2020improved} for IHT, $\delta_{3s}<0.577$ \cite{foucart2011hard} for HTP, $\delta_{3s}<0.165$ \cite{dai2009subspace} for SP,  and $\delta_{4s}<0.100$ \cite{needell2009cosamp}, $\delta_{4s}<0.478$ \cite{foucart2017mathematical}, $\delta_{4s}<0.510$  \cite{zhao2020improved} for CoSaMP.  A separate line of establishing the convergence property for hard-thresholding-based algorithms for  CS problems is from the optimization perspective, which  contrasts to the statistical error bound theory. It has been shown in \cite{blumensath2008iterative} that IHT converges to a local minimizer of SCO provided that $\A$ is full row rank and its spectral norm is smaller than one. A similar result was also achieved for NIHT if $\A$ is full row rank and $s$-regular \cite{blumensath2010normalized}, {where $\A$ is $s$-regular if any $s$ columns are linearly independent (see \cite[Definition 2.2]{Beck13})}. Then the authors in \cite{Beck13} proved that the whole sequence generated by IHT converges to an $L$-stationary point (see Definition \ref{def3.1}) if $\A$ is $s$-regular. 

For general SCO problems, a couple of convergence results from the optimization and statistics perspectives have been achieved. For starters, authors in \cite{Beck13} investigated IHT thoroughly and proved that any accumulating point of the sequence of IHT is an $L$-stationary point if the objective function is gradient Lipschitz continuous (see \cite[Theorem 3.1]{Beck13}). Almost at the same time,  authors in \cite{bahmani2013greedy} showed that the sequence generated by GraSP has statistical error-bound under the assumptions of regularities (see Definition \ref{def-RSCS}). Such an error-bound theory was also built for GraHTP in \cite{yuan2017gradient}. However, these methods have not seen better results than the linear convergence rate. Very recently, authors in \cite{zhou2021global} developed a Newton hard-thresholding pursuit (NHTP) algorithm with an excellent numerical performance in comparison with a number of leading solvers. The  algorithm has been proven to have a global convergence property and a quadratic convergence rate under the assumptions of regularities. However, the convergence results were obtained through carefully justifying parameters, which somewhat made the proofs very technical and restricted the assumptions of the problem. More comments are provided in Remarks \ref{remark-nhtp-gpnp} and \ref{remark-ass-cs}.

\subsection{Contributions}
 The aim of this paper is to develop a gradient projection Newton pursuit (GPNP) algorithm that possesses strong theoretical and numerical advantages. We summarize the main contributions as follows. 
 
{\bf I.  Distinct algorithmic framework.} We note that IHT or NIHT only performs the hard-thresholding operator to update the next point and hence belongs to the first-order methods with low computational complexity but  slow convergence rates. When the debiasing step is adopted in HTP or Newton step is used in NHTP, it would significantly accelerate the convergence and enhance the recovery accuracies. However, HTP might be too greedy since it does the debiasing every step even when $T_k$ is not identical to the support of the true signal.  Combining these two aspects, as shown in Algorithm \ref{Newton-switch-on}, GPNP takes hard-thresholding as the main step while performing Newton steps only when certain conditions (i.e., \eqref{Newton-switch-on}) are met.  Such a framework not only leads to a low computational complexity but also overcomes the drawback of HTP (i.e., being too greedy).

{\bf II.  Weaker assumptions for convergence analysis.} Similar to \cite{ Beck13, zhou2021global}, we also conduct convergence analysis for GPNP from the perspective of optimization. We prove that the generated sequence converges to a unique local minimizer globally and quadratically under the assumptions of regularities of the objective function. When it comes to CS problems, such convergence properties can be preserved {if $\A$ is $s$-regular,} which is weaker than the sufficient conditions on RIC for some greedy methods, such as CoSaMP, SP, HTP, and NHTP. Furthermore, the quadratic convergence turns out to be the termination within finitely many steps, a more pleasant property.

 {\bf III.  High numerical performance.} The extensive numerical experiments have demonstrated that GPNP is capable of running quickly and delivering {relatively accurate solutions.} When compared with other state-of-the-art methods, it is able to outperform them. Taking CS problems as an example, we select nine greedy and nine relaxation methods for comparisons. In general, greedy methods have excellent performance in terms of rendering accurate recoveries and running fast. However, they are weaker than relaxation methods to ensure successful recoveries, which can be testified by the results in Figure  \ref{fig:succ-s-ex1}. For instance,  relaxation methods like IRL1, DCL1L2, IRLSLq, and SAMP obtain higher success rates than the other  greedy methods. Nevertheless, GPNP, as a greedy method, not only significantly improves the success rates  for the greedy family  but also outperforms the selected relaxation methods.  
 
 \vspace{3mm}
\begin{figure}[!th]
\begin{subfigure}{.495\textwidth}
	\centering
	\includegraphics[width=1\linewidth]{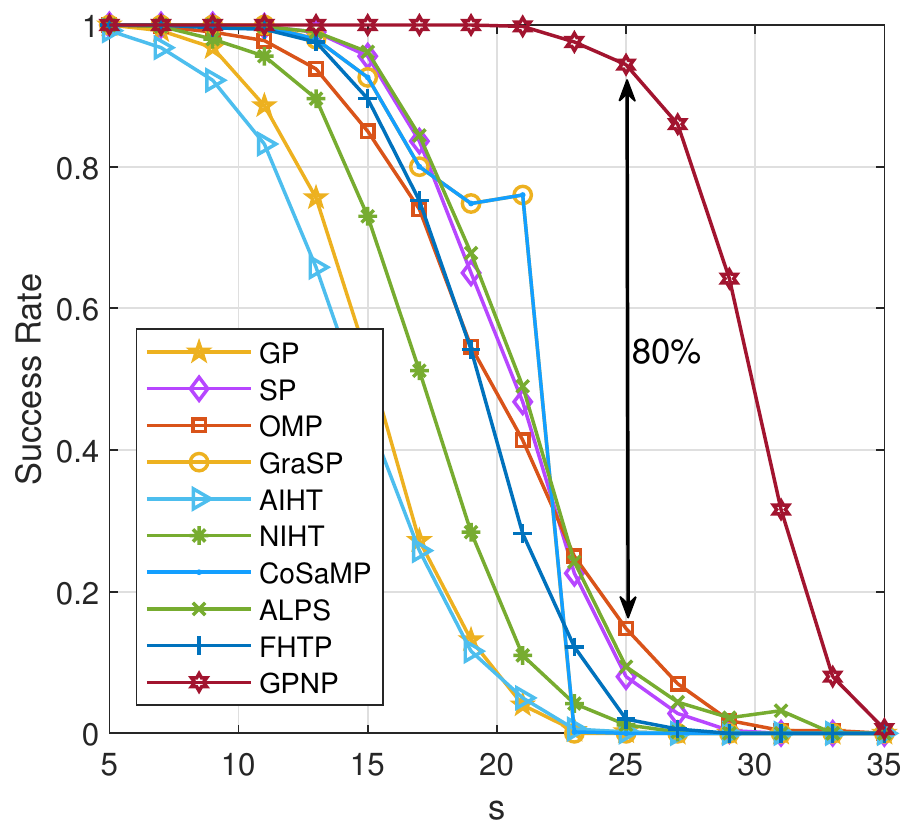}
	\caption{GPNP v.s. greedy methods.}
	\label{fig:succ-s-ex1-greedy}
\end{subfigure}	 
\begin{subfigure}{.495\textwidth}
	\centering
	\includegraphics[width=1\linewidth]{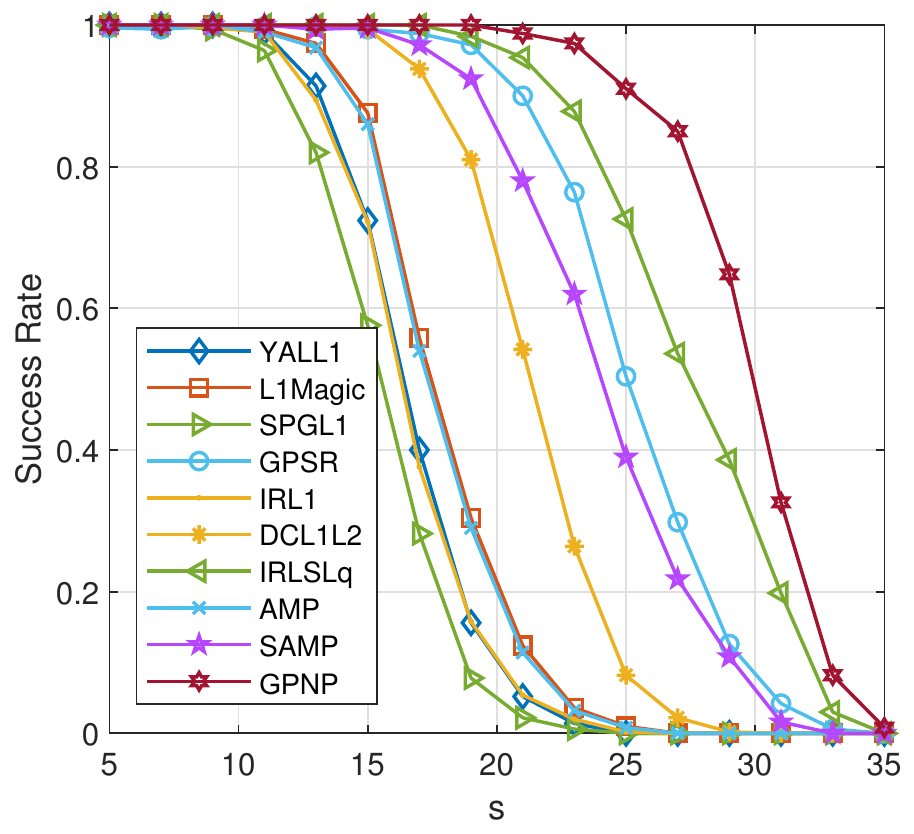}
	\caption{GPNP v.s. relaxation methods.}
	\label{fig:succ-s-ex1-non-greedy}
\end{subfigure}
\caption{Success rate v.s. sparsity level $s$ for Example \ref{cs-ex}.\label{fig:succ-s-ex1}}
\end{figure}

 \subsection{Organization and notation}
This paper is organized as follows.  In the next section, we present  the regularities of $f$ for convergence analysis as well as the optimality conditions of \eqref{sco}.  In  Section \ref{sec:htnp},  we  design the gradient projection Newton pursuit (GPNP) and establish its global and quadratic convergence properties, followed by the application into CS problems. In Section \ref{sec:num}, we aim to  demonstrate the high performance of GPNP by conducting extensive numerical comparisons among GPGN and a number of leading solvers for solving CS and QCS problems. {Some conclusive remarks are given in the last section. }
 
To end this section, apart from the aforementioned notation, we also summarize some other ones here. Throughout the paper, we denote $[m]:=\{1,2,\ldots ,m\}$ and $S$ the sparse set in $\R^n$ by 
$$S :=\{\bx\in \R^{n}:\|\bx \|_{0}\leq s\}.$$ 
Let $\|\cdot\|$ be the Euclidean norm for a vector and the spectral norm for a matrix.  The complement set of $T$ is written as $\overline{T}$. For vector $\bx$,  its neighbourhood with a positive radius $c$, support set, subvector indexed on $T$, and $s$th largest element in magnitude is written as $N(\bx,c):=\{\bw\in\R^n: \|\bx-\bw\|\leq c\}$,    $\supp(\bx):=\{i\in [n]: x_{i}\neq 0\}$,   $\bx_{T}$, and  $\bx^\downarrow_{s}$, respectively.  For matrix $\A\in\R^{m\times n}$,   $\A_{TJ}$ represents the sub-matrix containing rows indexed by ${T}$ and columns indexed by $J$, in particular, $\A_{T:}:=\A_{T[n]}$ and $\A_{:J}:=\A_{[m]J}$. Finally, we write the sub-gradient and  sub-Hessian  as
\begin{eqnarray*}
\nabla_T f(\bx):=(\nabla f(\bx))_{T},~~\nabla_{T,J}^2f(\bx):=(\nabla^2f(\bx))_{T,J}.
\end{eqnarray*}
\section{Preliminaries}\label{sec:pre}

The common assumptions on the objective function used for convergence analysis are regularities that often share the property of restricted  strong convexity/smoothness, see \cite{agarwal2012fast, shalev2010trading, negahban2012unified, bahmani2013greedy, blumensath2013compressed, yuan2017gradient,zhou2021global}.
We present them below in a way that is conducive to our technical proofs.

\begin{definition} \label{def-RSCS}(Restricted strongly convex and smooth functions)
Suppose that $f: \mathbb{R}^n \mapsto \mathbb{R}$ is twice continuously differentiable. {Let $r\in[n]$ be an integer (e.g., $r=s$ or $r=2s$ used in this paper). }
\begin{itemize}
	\item[(i)] We say $f$ is $r$-restricted strongly smooth ($r$-RSS) if there  exists  a constant $L_{r} >0$ such that, for any $s$-sparse vectors $\bz,\bx$ with $\|\bz-\bx\|_0\leq r$,
	\begin{eqnarray}\label{r-rss}
	f(\bz) \leq f(\bx)+\langle\nabla f(\bx), \bz-\bx\rangle+(L_r/{2})\|\bz-\bx\|^{2}.	
	\end{eqnarray}

	\item[(ii)] We say $f$ is $r$-restricted strongly convex ($r$-RSC) if there exists a constant $\ell_{r} >0$
	such that,  for any $s$-sparse vectors $\bz,\bx$ with $\|\bz-\bx\|_0\leq r$,
	\begin{eqnarray}\label{r-rsc}
	f(\bz) \geq f(\bx)+\langle\nabla f(\bx), \bz-\bx\rangle+(\ell_r/{2})\|\bz-\bx\|^{2}.	
	\end{eqnarray}
When   $\ell_{r}=0$,  $f$ is said to be $r$-restricted convex ($r$-RC). 
We say $f$ is locally $r$-RSC at $\bx$ if the above inequality only holds for those $s$-sparse vectors $\bz$ around $\bx$.
	
\item[(iii)] We say that $f$ is locally $r$-restricted Hessian Lipschitz continuous ($r$-RHLC) at a  $s$-sparse vector $\bx$ if
there exists a Lipschitz constant  $M_r>0$ (relied on $\bx$) such that, for any vectors  {$\bw,\bz\in S\cap N(\bx,\eta\}$} with $\|\bw-\bz\|_0\leq r$, 
\begin{eqnarray}\label{r-rhlc}
  \| \nabla^2_{TT} f(\bw) - \nabla^2_{TT} f(\bz)  \| \le M_r \| \bw - \bz\|, 
\end{eqnarray}
where $\eta>0$ is a given radius and $T$ can be any index set  {with $|T|= r$} and $T \supseteq\supp(\bx)$.
\end{itemize}
\end{definition}
In the subsequent part of this section, we present  some optimality conditions of (\ref{sco}) which are useful for the algorithmic design. Hereafter, we always let
 \begin{eqnarray}\label{Def-T*}
 \arraycolsep=1.4pt\def\arraystretch{1.5}
\begin{array}{rll}
 {T_* :=\supp (\bx^{*}).}
\end{array}
\end{eqnarray}
 \begin{theorem}\label{theo01-suff-necc} Suppose $f$ is $s$-RC and let $\bx^{*}$ be a point that satisfies
 \begin{eqnarray}\label{l1-KKT}
 \arraycolsep=1.4pt\def\arraystretch{1.5}
\begin{array}{rll}
 \nabla_{T_*} f(\bx^{*}) =0,& &~~{\rm if}~~ \|\bx^{*}\|_{0}=s,\\
{ \nabla f(\bx^{*})  =0},&&~~{\rm if}~~\|\bx^{*}\|_{0}<s.
\end{array}
\end{eqnarray}
 Then $\bx^{*}$ is a local minimizer of  (\ref{sco}) if and only if it satisfies (\ref{l1-KKT}).  Furthermore, the local minimizer   $\bx^{*}$ is unique if   $f$ is locally $s$-RSC at $\bx^*$.
\end{theorem}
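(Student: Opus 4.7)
The plan is to tackle the three assertions (sufficiency, necessity, uniqueness) separately, with one common technical observation threading through all of them. That observation is a \emph{support-continuity} fact: whenever $\bz\in S$ is sufficiently close to $\bx^*$, every index of $T_*$ stays in $\supp(\bz)$, because a coordinate of $\bx^*$ bounded away from zero cannot be wiped out by a small perturbation. Combined with $\|\bz\|_0\le s$, this forces $\supp(\bz-\bx^*)\subseteq\supp(\bz)$ and hence $\|\bz-\bx^*\|_0\le s$, which is precisely the condition required to invoke the $s$-RC/$s$-RSC inequalities from Definition~\ref{def-RSCS}.

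For sufficiency, take any $\bz\in S\cap N(\bx^*,c)$ with $c$ small. By support continuity $\|\bz-\bx^*\|_0\le s$, so $s$-RC yields $f(\bz)\ge f(\bx^*)+\langle\nabla f(\bx^*),\bz-\bx^*\rangle$. If $\|\bx^*\|_0=s$, then support continuity actually gives $\supp(\bz)=T_*$ (since $|\supp(\bz)|\le s=|T_*|$), so the inner product collapses to $\langle\nabla_{T_*}f(\bx^*),(\bz-\bx^*)_{T_*}\rangle$, which is zero by the first line of \eqref{l1-KKT}. If $\|\bx^*\|_0<s$, the second line of \eqref{l1-KKT} makes the inner product vanish directly. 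Either way $f(\bz)\ge f(\bx^*)$. For necessity, suppose $\bx^*$ is a local minimizer. When $\|\bx^*\|_0=s$, perturbations $\bx^*+t\bd$ with $\supp(\bd)\subseteq T_*$ remain in $S\cap N(\bx^*,c)$ for small $|t|$, so the unconstrained first-order condition on the subspace indexed by $T_*$ delivers $\nabla_{T_*}f(\bx^*)=0$. When $\|\bx^*\|_0<s$, for every coordinate $i\in[n]$ the vector $\bx^*\pm t\,\mathbf{e}_i$ is still $s$-sparse for small $t$ (at most one new nonzero is introduced into an at-most-$(s-1)$-sparse point), and local minimality forces every partial derivative to vanish, i.e.\ $\nabla f(\bx^*)=0$.

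For uniqueness under local $s$-RSC, the same support and gradient bookkeeping that produced sufficiency now yields, for every $\by\in S\cap N(\bx^*,c)$,
\[
f(\by)\ \ge\ f(\bx^*)+\tfrac{\ell_s}{2}\|\by-\bx^*\|^{2},
\]
so $\bx^*$ is a \emph{strict} local minimizer of $f$ over $S$. A standard argument then upgrades strict local minimality to uniqueness: if some distinct local minimizer $\by^*$ lay in a small enough neighborhood of $\bx^*$, one could take $\bx^*$ itself as a witness of strictly smaller value inside the local-minimum neighborhood of $\by^*$, contradicting local minimality of $\by^*$. The chief technical nuisance throughout is the case split on $\|\bx^*\|_0=s$ versus $\|\bx^*\|_0<s$ together with the support bookkeeping needed to certify $\|\bz-\bx^*\|_0\le s$; once those are in place, every assertion reduces to a direct plug-in of the defining inequalities of $s$-RC and $s$-RSC.
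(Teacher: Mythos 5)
Your proof is correct in its essentials and, for the uniqueness half, follows the same route as the paper: both arguments shrink the neighbourhood radius so that $\min_{i\in T_*}|x_i^*|$ protects the support (your ``support continuity''), both observe that this forces $\|\bz-\bx^*\|_0\le s$ so that the $r$-RSC inequality with $r=s$ is even applicable, and both then split on $\|\bx^*\|_0=s$ versus $\|\bx^*\|_0<s$ to kill the gradient term and extract quadratic growth. The main structural difference is in the first claim: the paper does \emph{not} prove the local-minimizer $\Leftrightarrow$ \eqref{l1-KKT} equivalence at all---it simply cites \cite[Table 2]{pan2015solutions} and \cite[Theorem 10.1]{RW1998}---whereas you supply a self-contained argument (sufficiency via $s$-RC plus the inner-product cancellation, necessity via admissible perturbations along $T_*$ when $\|\bx^*\|_0=s$ and along each $\mathbf{e}_i$ when $\|\bx^*\|_0<s$). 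That is a more elementary and more informative treatment; the trade-off is only length, and your perturbation argument is the one the cited references themselves would give.

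One caveat on the last step: the asserted ``standard upgrade'' from strict local minimality to ``no other local minimizer nearby'' is not valid as stated. If $\by^*$ is another local minimizer close to $\bx^*$, its certifying neighbourhood may have radius smaller than $\|\by^*-\bx^*\|$, so $\bx^*$ need not be ``inside the local-minimum neighborhood of $\by^*$'' and no contradiction arises from that line of reasoning alone; indeed strict (even quadratically strict) local minimality does not in general entail isolatedness. This does not, however, put you behind the paper, because the paper's own proof also stops at the quadratic-growth inequality $f(\bx)\ge f(\bx^*)+(\ell_s/2)\|\bx-\bx^*\|^2$ and from there simply declares uniqueness. If you want to close this gap properly, the cleanest route in this setting is to observe that local $s$-RSC forces $\nabla^2_{TT}f(\bx^*)\succeq\ell_s I$ for every $|T|=s$ with $T\supseteq T_*$, hence $f$ restricted to each such $s$-dimensional coordinate subspace is strongly convex near $\bx^*$ and has $\bx^*$ as its unique critical point there, and any nearby local minimizer would (by your necessity argument and support continuity) have to be such a critical point on one of these finitely many subspaces.
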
 
\begin{proof} The first claim follows from \cite[Table 2]{pan2015solutions} and \cite[Theorem 10.1]{RW1998}. We prove the second one. If $\bx^*=0$, then it is unique. Hence we  consider $\bx^*\neq0$. 
Since $\bx^{*}$ be a local minimizer of  (\ref{sco}), there is an $\epsilon>0$ such that
\begin{eqnarray*}
\bx^* = {\rm argmin}~f(\bx),~~{\rm s.t.}~\bx \in S \cap N(\bx^*, \epsilon).
\end{eqnarray*}
Let $\epsilon_*:=\min\{ \epsilon, \min_{i\in T_*}|x_i^*|\}>0$ and then $\bx^*$ is  a global minimizer of the following problem
\begin{eqnarray*}
\bx^* = {\rm argmin}~f(\bx),~~{\rm s.t.}~\bx \in S \cap N(\bx^*, \epsilon_*).
\end{eqnarray*}
The definition of $\epsilon_*$ implies  $T_*:=\supp(\bx^*)\subseteq\supp(\bx)$ for any $\bx \in S \cap N(\bx^*, \epsilon_*)$. If  $\|\bx^{*}\|_{0}<s$, then $ \nabla f(\bx^{*})=0$ by \eqref{l1-KKT}, which by \eqref{r-rsc} with $r=s$ derives that  
\begin{eqnarray*}
f(\bx)    \geq    f(\bx^*) + (\ell_s/2)\| \bx-\bx^*\|^2, 
\end{eqnarray*}
for any $s$-sparse vectors $\bx$. The above condition displays the uniqueness of locally optimal solution $\bx^*$. If $\|\bx^{*}\|_{0}=s$, then  for any $\bx \in S \cap N(\bx^*, \epsilon_*)$, we have $T_*=\supp(\bx)$ due to $|T_*|=\|\bx^{*}\|_{0}=s\geq \|\bx\|_{0}$ and $T_*\subseteq\supp(\bx)$. Using this condition,  \eqref{l1-KKT} and \eqref{r-rsc} with $r=s$ yields
\begin{eqnarray*}  \arraycolsep=1.4pt\def\arraystretch{1.5}
 \begin{array}{llll}
f(\bx) -  f(\bx^*)& \geq & \langle \bx-\bx^*,  \nabla f(\bx^{*})  \rangle +(\ell_s/2)\| \bx-\bx^*\|^2 \\
&= &  \langle (\bx-\bx^*)_{T_*},\nabla_{T_*} f(\bx^{*})   \rangle  +(\ell_s/2)\| \bx-\bx^*\|^2 \\ 
 &=& (\ell_s/2)\| \bx-\bx^*\|^2, 
\end{array}\end{eqnarray*}
which also shows the uniqueness of locally optimal solution $\bx^*$. 
\end{proof} 
 Based on Theorem  \ref{theo01-suff-necc}, however,  conditions \eqref{l1-KKT} mean that there is no useful information on the gradient indexed on $\overline T_*$ when $\|\bx^{*}\|_0=s$. Therefore, we introduce the concept of the $\alpha$-stationary point of   (\ref{sco}). It was first introduced by \cite[Definition 2.3]{Beck13} and known as the $L$-stationary point.
\begin{definition} \label{def3.1}
A point $\bx^{\ast}$ is called an $\alpha$-stationary point  of  (\ref{sco}) if there exists an $\alpha>0$ such that
\begin{eqnarray}\label{alpha-stationary-point}\bx^{*} \in \Pi_{s} (\bx^{*}-\alpha \nabla f(\bx^{*}) ).\end{eqnarray}
\end{definition}
\noindent The $\alpha$-stationary point  can be equivalently expressed as
\begin{eqnarray}\label{l1}
 \arraycolsep=1.4pt\def\arraystretch{1.5}
\begin{array}{lll}
\nabla_{T_*} f(\bx^{*}) = 0,~~\alpha\|\nabla_{\overline T_*} f(\bx^{*})  \|_\infty  \leq  (\bx^*)^\downarrow_{s}.
\end{array}
\end{eqnarray}
In comparison with  \eqref{l1-KKT},  condition \eqref{l1}  provides more information on the gradient indexed on $\overline{T}_*$. It can be clearly seen that the latter suffices to the former and thus is a stronger condition.  The following results reveal the relationships between an $\alpha$-stationary point and a local/global minimizer  of   (\ref{sco}).

\begin{theorem}\label{theo01} The following statements are true for  (\ref{sco}).
\begin{itemize}
\item[i)] An $\alpha$-stationary point is a local minimizer if $f$ is $s$-RC.
\item[ii)] A global minimizer is an $\alpha$-stationary point with $0<\alpha\leq 1/L_{2s}$ if $f$ is $2s$-RSS.
\item[iii)] Let $\bx^{*}$ be an $\alpha$-stationary point and suppose $f$ is $2s$-RSC with a constant $\ell_{2s}>0$. If $\|\bx^{*}\|_{0}<s$, then it is a unique global minimizer. If $\|\bx^{*}\|_{0}=s$, then it is a unique global minimizer if  $\alpha > 1/\ell_{2s}$.
\end{itemize}   
\end{theorem}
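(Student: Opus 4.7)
The plan is to exploit two equivalent reformulations of $\alpha$-stationarity: the componentwise form \eqref{l1}, and the geometric inequality
\begin{equation*}
\|\bw-\bx^*\|^2 + 2\alpha\langle\nabla f(\bx^*), \bw-\bx^*\rangle \geq 0 \qquad \text{for every $s$-sparse }\bw,
\end{equation*}
obtained by expanding the definition $\bx^*\in\Pi_s(\bx^*-\alpha\nabla f(\bx^*))$ through \eqref{htp-operator}. Each of the three parts below selects whichever form is more convenient.

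For part (i), I would first check that \eqref{l1} implies the KKT-type condition \eqref{l1-KKT}. If $\|\bx^*\|_0=s$ this is immediate from $\nabla_{T_*}f(\bx^*)=0$, while if $\|\bx^*\|_0<s$ the $s$-th largest magnitude satisfies $(\bx^*)^\downarrow_s=0$, and the inequality in \eqref{l1} then forces $\nabla_{\overline T_*}f(\bx^*)=0$ as well, so $\nabla f(\bx^*)=0$. Under the $s$-RC hypothesis, Theorem \ref{theo01-suff-necc} now delivers local minimality at once, and no real obstacle arises.

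For part (ii), the plan is to combine global optimality $f(\bw)\geq f(\bx^*)$ with the $2s$-RSS bound \eqref{r-rss} applied to the pair $(\bw,\bx^*)$ (which differs in at most $2s$ coordinates), yielding
\begin{equation*}
\langle\nabla f(\bx^*),\bw-\bx^*\rangle \geq -(L_{2s}/2)\|\bw-\bx^*\|^2
\end{equation*}
for every $s$-sparse $\bw$. Choosing $\alpha\leq 1/L_{2s}$ converts the right-hand side into $-\|\bw-\bx^*\|^2/(2\alpha)$, which is exactly the geometric form above, so $\bx^*\in\Pi_s(\bx^*-\alpha\nabla f(\bx^*))$. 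The mildly subtle point is to recognize that membership in the argmin set in \eqref{htp-operator} is equivalent to precisely this quadratic control on the directional derivative.

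For part (iii), the plan is to couple the geometric form of $\alpha$-stationarity with the $2s$-RSC bound \eqref{r-rsc}. When $\|\bx^*\|_0<s$, the same reduction used in part (i) gives $\nabla f(\bx^*)=0$, so \eqref{r-rsc} alone produces the strict quadratic gap $f(\bz)-f(\bx^*)\geq(\ell_{2s}/2)\|\bz-\bx^*\|^2$ for every $s$-sparse $\bz\neq\bx^*$. When $\|\bx^*\|_0=s$, the geometric inequality supplies $\langle\nabla f(\bx^*),\bz-\bx^*\rangle\geq -\|\bz-\bx^*\|^2/(2\alpha)$, and plugging it into \eqref{r-rsc} yields
\begin{equation*}
f(\bz)-f(\bx^*)\geq \frac{\alpha\ell_{2s}-1}{2\alpha}\|\bz-\bx^*\|^2,
\end{equation*}
which is strictly positive for $\bz\neq\bx^*$ precisely under the hypothesis $\alpha>1/\ell_{2s}$; this also explains why the threshold case cannot be included. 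The main obstacle I expect across the theorem is the bookkeeping for the two sparsity regimes $\|\bx^*\|_0<s$ and $\|\bx^*\|_0=s$, since the information on $\nabla_{\overline T_*}f(\bx^*)$ is only one-sided in the latter regime and must be extracted from the inequality in \eqref{l1} (or, equivalently, from the geometric form) rather than from an equality.
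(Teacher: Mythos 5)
Your proposal is correct, and for parts i) and iii) it follows essentially the same route as the paper: part i) is the observation that the $\alpha$-stationarity condition \eqref{l1} subsumes the KKT-type condition \eqref{l1-KKT} so that Theorem \ref{theo01-suff-necc} applies, and part iii) is exactly the paper's argument of expanding the projection into the quadratic inequality $2\alpha\langle\nabla f(\bx^*),\bz-\bx^*\rangle\geq-\|\bz-\bx^*\|^2$ (when $\|\bx^*\|_0=s$) or deriving $\nabla f(\bx^*)=0$ (when $\|\bx^*\|_0<s$) and then plugging into $2s$-RSC.

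The genuine difference is in part ii). The paper disposes of it with a citation to \cite[Theorem 2.2]{Beck13}, whereas you give a short self-contained argument: sandwich $f(\bx^*)\leq f(\bw)\leq f(\bx^*)+\langle\nabla f(\bx^*),\bw-\bx^*\rangle+(L_{2s}/2)\|\bw-\bx^*\|^2$ using global optimality and $2s$-RSS, deduce $\langle\nabla f(\bx^*),\bw-\bx^*\rangle\geq-(L_{2s}/2)\|\bw-\bx^*\|^2$ for every $s$-sparse $\bw$, and observe that when $\alpha\leq 1/L_{2s}$ this implies $\|\bw-\bx^*\|^2+2\alpha\langle\nabla f(\bx^*),\bw-\bx^*\rangle\geq(1-\alpha L_{2s})\|\bw-\bx^*\|^2\geq 0$, which, after expanding the squares in \eqref{htp-operator}, is exactly $\bx^*\in\Pi_s(\bx^*-\alpha\nabla f(\bx^*))$. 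Your version makes the theorem self-contained within the paper's own definitions and also makes visible that the same geometric reformulation of $\alpha$-stationarity drives parts ii) and iii) in opposite directions (one from RSS, one from RSC), a unification the paper leaves implicit by outsourcing part ii).
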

\begin{proof}  The conclusions in i) and ii) can be made by Theorem \ref{theo01-suff-necc} and \cite[Theorem 2.2]{Beck13}, respectively. We prove iii).   If $\|\bx^{*}\|_{0}<s$, then $(\bx^*)^\downarrow_{s} =0 $ leading to $ \nabla f(\bx^{*})=0$ from \eqref{l1}, which by \eqref{r-rsc} derives that, for any $s$-sparse vectors $\bx$, 
\begin{eqnarray}\label{global<s}
f(\bx)   \geq   f(\bx^{*}) + (\ell_{2s}/2) \|\bx-\bx^*\|^2. 
\end{eqnarray}
This shows that $\bx^*$  is   a unique global minimizer. If $\|\bx^{*}\|_{0}=s$, then the definition of $\Pi_s(\cdot)$ in \eqref{alpha-stationary-point} indicates that
$$\|\bx^*-(\bx^{*}-\alpha \nabla f(\bx^{*}))\|\leq\|\bx -(\bx^{*}-\alpha \nabla f(\bx^{*}))\|$$
for any $s$-sparse vectors $\bx$, which results in
$$ 2\alpha\langle f(\bx^{*}), \bx- \bx^{*} \rangle \geq - \|\bx- \bx^{*} \|^2.$$
Using this fact and \eqref{r-rsc} yields that, for any $s$-sparse vectors $\bx$, 
\begin{eqnarray}
\arraycolsep=1.4pt\def\arraystretch{1.5}
 \begin{array}{llll}
2f(\bx)   &\geq&    2f(\bx^{*}) + 2\langle \nabla f(\bx^{*}), \bx- \bx^{*} \rangle+  \ell_{2s}  \|\bx-\bx^*\|^2 \\
&\geq&    2f(\bx^{*}) +   (\ell_{2s} -1/ \alpha ) \|\bx-\bx^*\|^2,
\end{array}
\end{eqnarray}
which also displays the unique global optimality of $\bx^*$ since $\alpha>1/\ell_{2s}$.   
\end{proof}

\section{Gradient Projection Newton Pursuit} \label{sec:htnp}
Before the main results ahead of us, for a computed $s$-sparse point $\bu^k$, we choose an index set $\Gamma_k$ satisfying
  \begin{eqnarray}\label{def-gamma-k}\Gamma_k\supseteq{\rm supp}(\bu^k),\qquad|\Gamma_k|=s,  \end{eqnarray}
and denote
$$ \bg^k:= \nabla f(\bu^k),\qquad\H^k:= \nabla^2_{\Gamma_k\Gamma_k} f(\bu^k).$$
We point out that if $\|\bu^k\|_0=s$, then $\Gamma_k$ is unique, namely, $\Gamma_k = {\rm supp}(\bu^k)$. If $\|\bu^k\|_0<s$, then there are multiple choices for $\Gamma_k$. We just pick one of them. The algorithmic framework of  gradient projection Newton pursuit (GPNP)   is presented in Algorithm \ref{algorithm 1} and consists of two major components: 
\begin{algorithm}[!th]
\SetAlgoLined
 Initialize $\bx^{0}$,  $\tau >0, \sigma >0, 1>\gamma>0, \pi_0 >\varepsilon>0, \epsilon>0$ and {set $k=0$}.\\
 \While{$\pi_k > \varepsilon$}{
  \underline{Gradient projection:} Find the smallest integer $q_k = 0, 1, \ldots $ such that \\
 \parbox{.92\textwidth}{\begin{eqnarray}\label{armijio-descent-property} 
 f(\bx^k(\tau\gamma^{q_{k}}))\leq f(\bx^k)-(\sigma/2)\|\bx^k(\tau\gamma^{q_{k}})-\bx^{k}\|^{2},
\end{eqnarray}}\\
{ where $\bx^k(\alpha)$ is defined by (\ref{notation-k-1}).}  Set $\alpha_k= \tau\gamma^{q_{k}}$,  $ \bu^k=\bx^k(\alpha_k)$ and $\bx^{k+1}=\bu^{k}$.\\
 \underline{Newton pursuit:}  \If{$\supp(\bx^k)=\Gamma_k$ or $\| \bg^k\|<\epsilon$}{
 If the following equations are solvable\\
   \parbox{.88\textwidth}{\begin{eqnarray}\label{Newton-descent-property}
	  \H^k (\bv^{k}_{\Gamma_k}-\bu^{k}_{\Gamma_k}) =   -  \bg_{\Gamma_k}, ~~~~\bv^{k}_{\overline\Gamma_k}=0,
\end{eqnarray}}\\
 and the solution $\bv^k$ satisfies\\
    \parbox{.88\textwidth}{\begin{eqnarray}
    \label{Newton-descent-property-1}
	 f(\bv^{k})  \leq f(\bu^{k})-(\sigma/2)  \|\bv^k-\bu^k\|^2 ,
\end{eqnarray}}\\
then set $\bx^{k+1}=\bv^{k}$.
   }
Compute $\pi_{k+1}$ and set $k= k+1$.
 }
 Output the solution $\bx^k.$
 \caption{GPNP: Gradient Projection Newton Pursuit \label{algorithm 1}}
\end{algorithm}
\begin{itemize}
\item[I)] {The main steps select a point, $\bx^k(\alpha)$, from the hard-thresholding operator/gradient projection, namely,
   \begin{eqnarray}\label{notation-k-1}
  \bx^k(\alpha) \in  \Pi_{s}(\bx^{k}-\alpha \nabla f(\bx^k)),
   \end{eqnarray}
   where $\alpha>0$ is the step size that is chosen properly so as to 
 make the objective function value decreasing with a desirable scale. We emphasize that the right-hand side of \eqref{notation-k-1} is a set relying on $\alpha$ for given $\bx^{k}$. Any point in the set can be used to define point $\bx^k(\alpha)$. It is easy to see that if $\alpha=0$ and  $\|\bx^{k}\|_0\leq s$, then $\bx^k(0) = \bx^k$ due to $\Pi_{s}(\bx^{k})=\{\bx^{k}\}$.}
 
  \item[II)] The second part adopts a Newton step to speed up the convergence. However, Newton steps are only performed when one of the following conditions is satisfied, 
\begin{eqnarray} \label{Newton-switch-on}
\supp(\bx^k) = \Gamma_k,~~~~\| \bg^k\|<\epsilon, 
\end{eqnarray}
where  $\epsilon>0$ is a given tolerance. {The above two conditions can be deemed as the conditions for checking the neighbourhood of a locally (or globally) optimal solution, say ${\bf x}^*$. If $\|{\bf x}^*\|_0=s$, then points around it have the same support, corresponding to the first condition. If $\|{\bf x}^*\|_0<s$, then points around it have a small gradient due to \eqref{l1-KKT}, corresponding to the second condition. Therefore, once a point ${\bf x}^k$ meets one of these conditions, it falls into the neighbourhood of a locally (or globally) optimal solution. Then we perform Newton steps to find it quickly. In a nutshell, using \eqref{Newton-switch-on} as a switch of Newton steps enables to accelerate the convergence. }
\end{itemize}

\begin{remark}For the halting condition of GPNP  in Algorithm \ref{algorithm 1}, in our numerical experiments, we set $\pi_k$ by
\begin{eqnarray} \label{tol}
	 \pi_k :=
	  \arraycolsep=1.4pt\def\arraystretch{1.5}
 \left\{ \begin{array}{lll}
    \|  \nabla f(\bx^k)\| , & k<k_0,\\
  \max \left\{{\rm std}\left(f_k,f_{k-1},\ldots ,f_{k-k_0}\right),\|  \nabla f(\bx^k)\|\right \},& k \geq k_0, 
 \end{array}
 \right.
\end{eqnarray}
where $f_k:=f(\bx^k)$, $k_0$ is an integer (e.g. $k_0=5$ in the numerical experiments) and ${\rm std}(\bx)$ calculates the standard deviation of $\bx$. In the next section,  Theorem \ref{global-convergence} will show that whole sequence $\{\bx^k\}$  converges to an $\alpha$-stationary point  (say $\bx^*$) satisfying $\|\bx^*\|_0\leq s$. As a result,
\begin{eqnarray}  \label{two-cases}
 \arraycolsep=1.4pt\def\arraystretch{1.5}
 \begin{array}{lll}
\text{if}~~\|\bx^*\|_0=s~~\text{then}~~\supp(\bx^k)=\Gamma_k,\\
\text{if}~~\|\bx^*\|_0<s~~\text{then}~~\|  \bg^k\|<\epsilon,
\end{array}
\end{eqnarray} 
 for sufficiently large $k$.   The above relationships (see the proof of Theorem \ref{quadratic-lemma}) imply that the conditions for Newton steps will eventually be satisfied.  What is more,   the first relation in (\ref{two-cases}) enables to guarantee sufficiently small ${\rm std}(f_k,f_{k-1},\ldots ,f_{k-k_0})$, while the second relation can ensure a small value of $\| \bg^k\|$ due to $\bg^k \rightarrow \nabla f(\bx^*)=0$ (see the proof of Theorem \ref{quadratic-lemma}). Therefore, the designed halting condition (\ref{tol}) makes sense.   
 
\end{remark}

 \begin{remark}\label{remark-nhtp-gpnp}
 We note that NHTP proposed in \cite{zhou2021global} also integrates the Hard-thresholding operator and  Newton steps. We would like to emphasize that the algorithmic frameworks of NHTP and GPNP are different. The former takes Newton steps as its main steps whilst using gradient descent as compensation only when Newton directions violate certain conditions. By contrast, gradient projections are dominant steps in GPNP, and Newton directions are performed only when certain conditions are met.
\end{remark}
	
\subsection{Global convergence}
In this subsection, we aim to establish the main convergence results. The first result shows that step size $\alpha_k$ is bounded away from zero and the sequence of objective function $\{f(\bx^{k})\}$ is strictly decreasing.
\begin{lemma} \label{theo05} Suppose $f$ is  $2s$-RSS with $L_{2s}>0$. The following results hold for sequence $\{\bx^{k}\}$ generated by GPNP.
\begin{itemize}
\item[1)] For any $0<\alpha<1/(\sigma+L_{2s})$, it holds that
		\begin{eqnarray}
		\label{zk-alpha-zk}~~f(\bx^k(\alpha))\leq f(\bx^{k})- (\sigma/2) \|\bx^k(\alpha) -\bx^{k}\|^{2},
		\end{eqnarray}
which hence leads to
		\begin{eqnarray}
		\label{alpha-k} \inf_{k\geq 0} \alpha_k \geq \underline\alpha :=\min\left\{\tau, {\gamma} /{(\sigma+ L_{2s} )}  \right\}>0.
		\end{eqnarray}	
\item[2)] $\{f(\bx^k)\}$ is a strictly decreasing sequence and $$\lim\limits_{k\rightarrow \infty}\|\bu^{k}-\bx^{k}\|=\lim\limits_{k\rightarrow \infty}\|\bx^{k+1}-\bx^{k}\|=0.$$ 
\item[3)] Any accumulating point of $\{\bx^k\}$ is an $\alpha$-stationary point  of (\ref{sco}) with any $0<\alpha\leq\underline{\alpha}$. 
\end{itemize}
\end{lemma}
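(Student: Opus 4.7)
The plan is to prove the three parts in sequence, with part 1) supplying the descent estimate that drives parts 2) and 3). For part 1), the key tool is the $2s$-RSS inequality: because $\bx^k(\alpha)$ and $\bx^k$ are both $s$-sparse, their difference has at most $2s$ nonzero entries, so \eqref{r-rss} applies and yields $f(\bx^k(\alpha)) \le f(\bx^k) + \langle \nabla f(\bx^k), \bx^k(\alpha)-\bx^k\rangle + (L_{2s}/2)\|\bx^k(\alpha)-\bx^k\|^2$. I then exploit the defining optimality of $\bx^k(\alpha) \in \Pi_s(\bx^k - \alpha\nabla f(\bx^k))$ against the $s$-sparse competitor $\bx^k$: expanding $\|\bx^k(\alpha) - (\bx^k - \alpha\nabla f(\bx^k))\|^2 \le \|\bx^k - (\bx^k - \alpha\nabla f(\bx^k))\|^2$ rearranges to $\langle \nabla f(\bx^k), \bx^k(\alpha) - \bx^k\rangle \le -\|\bx^k(\alpha) - \bx^k\|^2/(2\alpha)$. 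Substituting gives $f(\bx^k(\alpha)) - f(\bx^k) \le (L_{2s}/2 - 1/(2\alpha))\|\bx^k(\alpha) - \bx^k\|^2$, and the hypothesis $\alpha < 1/(\sigma+L_{2s})$ delivers \eqref{zk-alpha-zk}. For the lower bound \eqref{alpha-k} I split on the minimal $q_k$: if $q_k = 0$ then $\alpha_k = \tau$; otherwise $\tau\gamma^{q_k-1}$ fails the Armijo test \eqref{armijio-descent-property}, so by the contrapositive of what was just proved $\tau\gamma^{q_k-1} \ge 1/(\sigma + L_{2s})$, whence $\alpha_k = \gamma \cdot \tau\gamma^{q_k-1} \ge \gamma/(\sigma+L_{2s})$.

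For part 2), the Armijo rule \eqref{armijio-descent-property} gives $f(\bu^k) \le f(\bx^k) - (\sigma/2)\|\bu^k - \bx^k\|^2$, and $f(\bx^{k+1}) \le f(\bu^k)$ holds in both branches: either $\bx^{k+1} = \bu^k$, or $\bx^{k+1} = \bv^k$ with \eqref{Newton-descent-property-1} supplying an additional drop of $(\sigma/2)\|\bv^k - \bu^k\|^2$. Hence $\{f(\bx^k)\}$ is strictly decreasing; since $f$ is bounded below it converges, and telescoping the combined inequality makes $\sum_k \|\bu^k - \bx^k\|^2$ finite and $\sum_k \|\bv^k - \bu^k\|^2$ finite (the latter summed only over Newton iterations). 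Therefore $\|\bu^k - \bx^k\| \to 0$ and $\|\bv^k - \bu^k\| \to 0$, and the triangle inequality yields $\|\bx^{k+1} - \bx^k\| \to 0$ in either branch.

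For part 3), let $\bx^{k_j} \to \bx^*$; part 2) then forces $\bu^{k_j} \to \bx^*$. Since $\alpha_{k} \in [\underline\alpha, \tau]$ is bounded, a further subsequence satisfies $\alpha_{k_j} \to \alpha^* \ge \underline\alpha$. From $\bu^{k_j} \in \Pi_s(\bx^{k_j} - \alpha_{k_j}\nabla f(\bx^{k_j}))$ one has $\|\bu^{k_j} - (\bx^{k_j} - \alpha_{k_j}\nabla f(\bx^{k_j}))\| \le \|\bw - (\bx^{k_j} - \alpha_{k_j}\nabla f(\bx^{k_j}))\|$ for every $\bw \in S$; passing to the limit using continuity of $\nabla f$ yields $\bx^* \in \Pi_s(\bx^* - \alpha^*\nabla f(\bx^*))$. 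Finally, the equivalent form \eqref{l1} shows that $\alpha$-stationarity is preserved when $\alpha$ shrinks (both defining inequalities only become easier), so $\bx^*$ is $\alpha$-stationary for every $0 < \alpha \le \alpha^*$, and in particular for every $0 < \alpha \le \underline\alpha$. The main obstacle I anticipate is the discontinuity of $\Pi_s$ in this last step; I handle it by the closed-graph argument above, which uses only the projection's defining inequality rather than continuity of the set-valued map itself.
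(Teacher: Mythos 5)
Your proof is correct and follows essentially the same route as the paper in all three parts: the $2s$-RSS inequality plus the projection optimality inequality for part 1), telescoping the combined Armijo/Newton descent for part 2), and closedness of $S$ plus the projection's defining inequality (together with the monotonicity of \eqref{l1} in $\alpha$) for part 3). The only minor variation is in part 3), where you pass directly to the limit in the pointwise projection inequality whereas the paper argues by contradiction from an assumed strict inequality — both are equally valid instantiations of the same idea.
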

\begin{proof}1)   It follows from \eqref{notation-k-1} that  
$$\| \bx^k(\alpha) -( \bx ^k - \alpha  \nabla f(\bx^k))\|^2 \leq \|\bx^k -( \bx^k - \alpha \nabla f(\bx^k))\|^2,$$
 which results in
   \begin{eqnarray*} \label{eta-point-not-3}
 2\alpha\langle   \nabla f(\bx^k),  \bx^k(\alpha)-  \bx^k\rangle \leq -  \| \bx^k(\alpha)-  \bx^k\|^2.
  \end{eqnarray*}
Using the above condition and $f$ being the $2s$-RSS   derives that
\begin{eqnarray*}
 \arraycolsep=1.4pt\def\arraystretch{1.5}
\begin{array}{llll}
2f(\bx^k(\alpha)) &\leq&    2f(\bx^k)+
  2 \langle  \nabla f(\bx^k),\bx^k(\alpha)-\bx^k\rangle
+  L_{2s}  \| \bx^k(\alpha)-\bx^k \|^2\\
& {\leq}& 2f(\bx^k) - ( {1}/{ \alpha } - L_{2s} )\|\bx^k(\alpha)-\bx^k\|^2\\
&\leq& 2f(\bx^k)  -   \sigma \|\bx^k(\alpha)-\bx^k\|^2. ~~~(\text{by~$0<\alpha\leq  1/(\sigma+L_{2s})$})
\end{array}
\end{eqnarray*}
The above relationship indicates that \eqref{armijio-descent-property} can be met as long as $\alpha_k=\tau\gamma^{q_k}\leq 1/(\sigma+L_{2s})$, thereby resulting in  $\alpha_k\geq \tau\gamma^{q_k+1}\geq \gamma/(\sigma+L_{2s})$. This and $\alpha_k\leq \tau$ prove the desired result.

2)  By the framework of Algorithm \ref{algorithm 1} that  $ \bu^k=\bx^k(\alpha_k)$ and  \eqref{zk-alpha-zk}, it follows
\begin{eqnarray}
\label{fact-u-z-0}		2f(\bu^{k}) \leq 
		 2f(\bx^k)-   \sigma  \|\bu^{k}-\bx^k\|^2.
		\end{eqnarray}
If $\bx^{k+1}=\bu^k$, then  we obtain
\begin{eqnarray*}
\label{fact-u-z}		2f(\bx^{k+1}) \leq 
		 2f(\bx^k)-   \sigma  \|\bx^{k+1}-\bx^k\|^2.
		\end{eqnarray*}
If $\bx^{k+1}=\bv^k$, then  we obtain
\begin{eqnarray*}\label{fact-u-z-1}
 \arraycolsep=1.4pt\def\arraystretch{1.5}
\begin{array}{lllr}
2f(\bx^{k+1})= 2f(\bv^{k}) 
 &\leq& 2f(\bu^{k}) -  \sigma \|\bv^{k} -\bu^k\|^2 &~~(\text{by \eqref{Newton-descent-property-1}})\\
&=&  2f(\bu^{k}) -  \sigma \|\bx^{k+1} -\bu^k\|^2 \\
&\leq&   2f(\bx^{k}) -   \sigma \|\bu^k-\bx^k\|^2 -  \sigma \|\bx^{k+1}-\bu^k\|^2&~~(\text{by \eqref{zk-alpha-zk}})\\
&\leq&   2f(\bx^{k})  -(\sigma/2)\|\bx^{k+1}-\bx^k\|^2, 
\end{array}
\end{eqnarray*}
where the last inequality is from fact $\|{\bf a}+{\bf b}\|^2\leq 2\|{\bf a}\|^2+2\|{\bf b}\|^2$  {for all vectors ${\bf a}$ and ${\bf b}$}. Both cases lead to	 \begin{eqnarray}\label{decrese-pro}
 \arraycolsep=1.4pt\def\arraystretch{1.5}
\begin{array}{lll}
		 2f(\bx^{k+1}) &\leq& 2f(\bx^{k})  - (\sigma/2)\|\bx^{k+1}-\bx^k\|^2,\\  
		  2f(\bx^{k+1}) &\leq& 2f(\bx^{k}) -   \sigma \|\bu^k-\bx^k\|^2. 
\end{array}
\end{eqnarray}
Therefore, $\{f(\bx^{k})\}$ is a non-increasing sequence, resulting in
		\begin{eqnarray*}
		 \arraycolsep=1.4pt\def\arraystretch{1.5}
\begin{array}{lll}
 \sum_{k\geq 0} \max\left\{\frac{\sigma}{4}\|\bx^{k+1}-\bx^k\|^2,  \frac{\sigma}{2} \|\bu^k-\bx^k\|^2\right\} 
		&\leq& \sum_{k\geq 0} \left[ f(\bx^{k}) - f(\bx^{k+1}) \right]~~~~(\text{by \eqref{decrese-pro}})\\
		&= &f(\bx^{0}) - \lim_{k\rightarrow\infty}f(\bx^{k})\\
		&\leq& +\infty.~~~~(\text{by $f$ being bounded from below})\\
		\end{array}	
		\end{eqnarray*}
The above condition suffices to $\lim_{k\rightarrow\infty}\|\bx^{k+1}-\bx^k\|=\lim_{k\rightarrow\infty}\|\bu^k-\bx^k\|=0.$

3) Let $\bx^*$ be any accumulating point of $\{\bx^k\}$. Conclusion 2) indicates that there exists a subset $\Omega$ of $\{0,1,2,\ldots \}$ such that both $\{\bx^k:k\in \Omega\}$ and $\{\bu^k:k\in \Omega\}$  converge to $\bx^*$. Moreover, by 1) that $\{\alpha_k\}$  belongs to a bounded interval $[\underline{\alpha} ,\tau]$, there is subsequence $K$ of $\Omega$ such that $\{\alpha_k:k\in K\}$ converges to an accumulating point (say $\alpha_*$).  So, we have 
\begin{eqnarray}
		\label{a-a-under} \lim_{ k (\in K)\rightarrow\infty}\bx^k =\lim_{ k (\in K)\rightarrow\infty}\bu^k = \bx^*,~~~~\lim_{ k (\in K)\rightarrow\infty}\alpha_k = \alpha_*\in[ \underline{\alpha} ,\tau].\end{eqnarray}
In the sequel, we prove $\bx^*$ is an $ \alpha$-stationary point. To proceed with that, let $\be^k :=\bx^k -\alpha_k \nabla f(\bx^k)$. The framework of Algorithm \ref{algorithm 1} implies
\begin{eqnarray}\label{u-l-P}
\bu^k  \in \Pi_{s}( \be^k ),~~~~ \lim_{ k (\in K)\rightarrow\infty} \be^k =\bx^* -  \alpha_*  \nabla f(\bx^*)=:\be^*.\end{eqnarray}
The first condition means $\bu^k  \in   {S}$ for any $ k \geq1$. Note that $ {S}$ is closed and  $\bx^*$ is the accumulating point of $\{\bu^k\}$ by \eqref{a-a-under}. Therefore, $\bx^*\in  {S}$, which results in
\begin{eqnarray}
 \label{z-*-g}
\min_{\bx\in  {S}}\|\bx -\be^*\| \leq \|\bx^*-\be^*\|.\end{eqnarray}
If the strict inequality holds in the above condition, then there is an $ \varepsilon_0>0$ such that
\begin{eqnarray*}
\|\bx^*-\be^*\|-\varepsilon_0 &=&  \min_{\bx\in {S}}\|\bx -\be^*\| \\
&\geq&\min_{\bx\in  {S}} \left(\|\bx - \be^k \|-\|\be^k- \be^*\|\right)\\
&=&\|\bu^k - \be^k \|-\|\be^k -\be^*\|. ~~~(\text{by~ \eqref{u-l-P}})
\end{eqnarray*}
Taking the limit of both sides of the above condition along $ k (\in K)\rightarrow\infty$ yields $\|\bx^*- \be^*\|-\varepsilon_0 \geq \|\bx^* - \be^*\|$ by \eqref{a-a-under} and \eqref{u-l-P}, a contradiction with  $ \varepsilon_0>0$. Therefore, we must have the equality in \eqref{z-*-g}, showing that
\begin{eqnarray*} \bx^* \in  \Pi_{s}(\be^*)= \Pi_{s}\left( \bx^* -   \alpha_*     \nabla f(\bx^*) \right). \end{eqnarray*}
Therefore, $\bx^*$ is an $ \alpha$-stationary point of \eqref{sco} with any $\alpha\in(0,\underline{\alpha}]$ due to $\underline{\alpha}\leq  \alpha_*$.
\end{proof}
To claim the global convergence of the whole sequence, we first need to show the boundedness of the sequence and then prove that any of its accumulating points is isolated. Finally, combining the above lemma and  \cite[Lemma 4.10]{more1983computing} can conclude  the desired result.
 \begin{theorem}\label{global-convergence} If $f$ is  $2s$-RSS  and $s$-RSC, then the whole sequence converges to a unique local minimizer of (\ref{sco}).
\end{theorem}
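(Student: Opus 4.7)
The plan is to follow the three-step roadmap indicated in the statement: (i) show that the iterate sequence $\{\bx^k\}$ is bounded, (ii) show that every accumulation point is isolated, and (iii) invoke \cite[Lemma 4.10]{more1983computing} together with Lemma \ref{theo05}(2) to upgrade subsequential convergence to convergence of the whole sequence. Theorems \ref{theo01-suff-necc} and \ref{theo01} will then identify the limit with a local minimizer of \eqref{sco}.

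For boundedness, the key observation is that every iterate lies in $S$, which is the finite union of the coordinate subspaces $S_T:=\{\bx\in\R^n:\supp(\bx)\subseteq T\}$ over all $T\subseteq[n]$ with $|T|=s$. For any two vectors $\bx,\bz\in S_T$ one has $\|\bx-\bz\|_0\leq s$, so the $s$-RSC hypothesis reduces to genuine $\ell_s$-strong convexity of $f$ restricted to $S_T$. Combined with $f$ being bounded from below, this yields coercivity of $f|_{S_T}$ for each such $T$, and hence a bounded sub-level set $\{\bx\in S_T:f(\bx)\leq f(\bx^0)\}$. Since Lemma \ref{theo05}(2) gives $f(\bx^k)\leq f(\bx^0)$ for all $k$ and $\{\bx^k\}\subseteq\bigcup_T S_T$, the whole sequence lies in a finite union of bounded sets and is therefore bounded.

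For isolation, let $\bx^*$ be any accumulation point of $\{\bx^k\}$. By Lemma \ref{theo05}(3), $\bx^*$ is an $\alpha$-stationary point with $\alpha\in(0,\underline{\alpha}]$, and Theorem \ref{theo01}(i) then yields that it is a local minimizer of \eqref{sco}. Moreover, inspection of the proof of Theorem \ref{theo01-suff-necc} delivers the quadratic growth inequality $f(\bx)\geq f(\bx^*)+(\ell_s/2)\|\bx-\bx^*\|^2$ for all $\bx\in S\cap N(\bx^*,\epsilon_*)$ with some $\epsilon_*>0$. Because $\{f(\bx^k)\}$ is non-increasing and bounded below, it converges to a finite limit which equals $f$ at every accumulation point. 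Hence if $\bx^{**}$ were a second accumulation point inside $N(\bx^*,\epsilon_*)$, then $\bx^{**}\in S$ (closedness of $S$) and $f(\bx^{**})=f(\bx^*)$, so the quadratic growth forces $\bx^{**}=\bx^*$. Thus $\bx^*$ is isolated in the set of accumulation points.

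Combining boundedness, the estimate $\|\bx^{k+1}-\bx^k\|\to 0$ from Lemma \ref{theo05}(2), and the isolation above, \cite[Lemma 4.10]{more1983computing} upgrades the existence of a convergent subsequence to convergence of the entire sequence $\{\bx^k\}\to\bx^*$; the $\alpha$-stationarity of $\bx^*$ together with Theorem \ref{theo01}(i) identifies $\bx^*$ as the unique local minimizer in its neighborhood. The main obstacle I anticipate is the boundedness step: since $s$-RSC controls only pairs of $s$-sparse vectors whose difference is itself $s$-sparse (whereas arbitrary $s$-sparse differences can have support of size up to $2s$), one cannot conclude coercivity of $f$ on all of $S$ in one stroke, and the support-by-support decomposition relies crucially on there being only finitely many choices of $T$.
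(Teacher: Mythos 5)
Your proof is correct and follows essentially the same roadmap as the paper's: boundedness of the iterates, isolation of any accumulation point via $\alpha$-stationarity and local $s$-RSC, and then \cite[Lemma 4.10]{more1983computing} together with $\|\bx^{k+1}-\bx^k\|\to 0$. The paper states the boundedness step tersely ("follows from three facts: $f$ being $s$-RSC, $\|\bx^k\|_0\le s$, $f(\bx^k)\le f(\bx^0)$"); your coordinate-subspace decomposition of $S$ into the finitely many $S_T$ and the observation that $s$-RSC reduces to genuine $\ell_s$-strong convexity on each $S_T$ is exactly the argument the paper is implicitly invoking, made explicit. Likewise, for isolation the paper appeals directly to the uniqueness clause of Theorem \ref{theo01-suff-necc}, while you rederive it from the quadratic growth in that theorem's proof together with the monotone convergence of $\{f(\bx^k)\}$ — a small reformulation, not a different method.
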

\begin{proof} The boundedness of sequence $\{\bx^k\}$ follows from three facts: $f$ being $s$-RSC, $\|\bx^k\|_0\leq s$, and $f(\bx^k)\leq f(\bx^0)$ by \eqref{zk-alpha-zk}. The boundedness ensures the existence of accumulating points of sequence $\{\bx^k\}$.  We note that any accumulating point (say $\bx^*$) is an $ \alpha$-stationary point (see Lemma \ref{theo05}) which  is also a local minimizer (see Theorem \ref{theo01}). This and Theorem \ref{theo01-suff-necc} indicate that $\bx^*$ is unique if $f$ is  (locally) $s$-RSC at $\bx^*$. In other words, $\bx^*$ is an isolated local minimizer of (\ref{sco}).  Finally, using three facts: $\bx^*$ being isolated, \cite[Lemma 4.10]{more1983computing}, and $\|\bx^{ k +1}-\bx^ k \|\rightarrow0$ by Lemma \ref{theo05}, we can conclude that
the whole sequence converges to  $\bx^*$, the unique local minimizer.  
\end{proof}
\subsection{Locally quadratic convergence}
Besides having the global convergence property, the support set of the sequence generated by GPNP can be identified within finitely many steps. Then Newton steps can always be admitted eventually, resulting in a quadratic convergence rate. 
\begin{theorem}\label{quadratic-lemma} Suppose $f$ is  $2s$-RSS  with $L_{2s}>0$ and  $s$-RSC with $\ell_s>0$. Let  $\{\bx^ k \}$ be the sequence generated by GPNP  and $\bx^*$ be its limit.  Then   
\begin{itemize}
\item[1)] for sufficiently large $k$, the support set of $\bx^*$ can be identified by
\begin{eqnarray}\label{support-identify}
 \arraycolsep=1.4pt\def\arraystretch{1.5}
 \begin{array}{lll}
\supp(\bx^*)\subseteq (\supp(\bx^k) \cap \Gamma_k),&~~\text{if}~~ & \|\bx^*\|_0<s,\\
\supp(\bx^*)\equiv \supp(\bx^k)\equiv \Gamma_k,&~~\text{if}~~& \|\bx^*\|_0=s.
\end{array} 
\end{eqnarray}
\end{itemize}
If we further assume $f$ is locally $s$-RHLC at $ \bx^*$  with $M_s^*>0$ and set $\sigma\in(0,\ell_s/4]$, then
\begin{itemize}
\item[2)] Newton steps are always admitted for sufficiently large $k$,
\item[3)] and the sequence eventually converges to $\bx^*$ quadratically, namely,
\begin{eqnarray} \label{quadratic-rate-formula}
 \arraycolsep=1.4pt\def\arraystretch{1.5}
 \begin{array}{lll}
\|\bx^{k+1}-\bx^{*}\| \leq  \frac{M_s^*(1+ \tau L_{2s})^2 }{\ell_s} \|\bx^{k}-\bx^*\|^{2}, \qquad \text{for sufficiently large $k$.}
\end{array} 
\end{eqnarray}  
\end{itemize}
\end{theorem}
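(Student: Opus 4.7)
The plan is to handle parts 1, 2, 3 sequentially, relying on three already-established facts: $\bx^k \to \bx^*$ from Theorem \ref{global-convergence}; $\bx^*$ is an $\alpha$-stationary point from Lemma \ref{theo05}; and $\|\bu^k - \bx^k\| \to 0$ also from Lemma \ref{theo05}, so $\bu^k \to \bx^*$. Support identification is the linchpin, because it legitimizes restricting every subsequent argument to the $s$-dimensional index set $\Gamma_k$.

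For part 1, because $\bx^k \to \bx^*$ componentwise and $|\bx^*_i| \geq (\bx^*)^\downarrow_{\|\bx^*\|_0} > 0$ for each $i \in T_*$, continuity forces $T_* \subseteq \supp(\bx^k)$ for all large $k$, and the same reasoning applied to $\bu^k$ gives $T_* \subseteq \supp(\bu^k)$. When $\|\bx^*\|_0 = s$, the sparsity constraint $\|\bx^k\|_0 \leq s$ combined with $|T_*| = s$ forces equality $\supp(\bx^k) = T_*$, and identically $\supp(\bu^k) = T_*$; then \eqref{def-gamma-k} yields $T_* = \supp(\bu^k) \subseteq \Gamma_k$ with $|\Gamma_k| = s = |T_*|$, hence $\Gamma_k = T_*$. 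When $\|\bx^*\|_0 < s$, only the weaker containment is needed, and it follows at once from $T_* \subseteq \supp(\bx^k)$ and $T_* \subseteq \supp(\bu^k) \subseteq \Gamma_k$.

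For part 2, I would verify the trigger \eqref{Newton-switch-on}, the solvability of \eqref{Newton-descent-property}, and the acceptance condition \eqref{Newton-descent-property-1} in turn. The trigger is immediate from part 1: if $\|\bx^*\|_0 = s$, then $\supp(\bx^k) = \Gamma_k$; if $\|\bx^*\|_0 < s$, then \eqref{l1} combined with $(\bx^*)^\downarrow_s = 0$ forces $\nabla f(\bx^*) = 0$, so $\bg^k = \nabla f(\bu^k) \to 0$ and $\|\bg^k\| < \epsilon$ eventually. Solvability of \eqref{Newton-descent-property} follows from positive-definiteness of $\H^k$: any $\bd$ supported on $\Gamma_k$ produces $s$-sparse perturbations of $\bu^k$, and $s$-RSC gives $\bd^\top \H^k \bd \geq \ell_s \|\bd\|^2$ via a Taylor expansion. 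For the descent inequality, a second-order Taylor expansion of $f$ about $\bu^k$ combined with the Newton identity $\bg^k_{\Gamma_k} = -\H^k(\bv^k - \bu^k)_{\Gamma_k}$ and the $s$-RHLC estimate $\|\nabla^2_{\Gamma_k\Gamma_k} f(\xi) - \H^k\| \leq M_s^* \|\bv^k - \bu^k\|$ produces
\begin{equation*}
f(\bv^k) - f(\bu^k) \leq -\tfrac{1}{2}(\bv^k - \bu^k)^\top \H^k (\bv^k - \bu^k) + \tfrac{M_s^*}{2}\|\bv^k - \bu^k\|^3,
\end{equation*}
which, using $\H^k \succeq \ell_s I$ on $\Gamma_k$ and $\|\bv^k - \bu^k\| \to 0$, falls below $-(\sigma/2)\|\bv^k - \bu^k\|^2$ for large $k$ provided $\sigma \leq \ell_s/4$.

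For part 3, the standard restricted Newton analysis applies once Newton steps are always taken. Support identification combined with $\alpha$-stationarity yields $\nabla_{\Gamma_k} f(\bx^*) = 0$ in both subcases (via $\nabla_{T_*} f(\bx^*) = 0$ and $\Gamma_k = T_*$ when $\|\bx^*\|_0 = s$, and via $\nabla f(\bx^*) = 0$ when $\|\bx^*\|_0 < s$), and $\supp(\bu^k - \bx^*) \subseteq \Gamma_k$. The fundamental theorem of calculus then gives $\bg^k_{\Gamma_k} = \int_0^1 \nabla^2_{\Gamma_k\Gamma_k} f(\bx^* + t(\bu^k - \bx^*))\,dt \cdot (\bu^k - \bx^*)_{\Gamma_k}$, and subtracting $\H^k (\bu^k - \bx^*)_{\Gamma_k}$, multiplying by $(\H^k)^{-1}$, and using $\|(\H^k)^{-1}\| \leq 1/\ell_s$ together with $s$-RHLC and $\int_0^1 (1-t)\,dt = 1/2$ yields
\begin{equation*}
\|\bv^k - \bx^*\| \leq \frac{M_s^*}{2\ell_s}\|\bu^k - \bx^*\|^2.
\end{equation*}
The last step is to control $\|\bu^k - \bx^*\|$ by $\|\bx^k - \bx^*\|$: applying the hard-thresholding nonexpansion to the $s$-sparse competitor $\bx^*$ and invoking $2s$-RSS together with the vanishing of the relevant gradient components at $\bx^*$ delivers $\|\bu^k - \bx^*\| \leq c(1 + \tau L_{2s})\|\bx^k - \bx^*\|$ for an absolute constant $c$, from which \eqref{quadratic-rate-formula} follows. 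The principal obstacle is support bookkeeping: coordinating $\Gamma_k$, $\supp(\bx^k)$, $\supp(\bu^k)$, and $T_*$ consistently is what makes each restriction to $\Gamma_k$ legitimate, and the dichotomy $\|\bx^*\|_0 = s$ versus $\|\bx^*\|_0 < s$ routes the key identity $\nabla_{\Gamma_k} f(\bx^*) = 0$ through different arguments.
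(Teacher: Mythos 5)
Your parts 1 and 2 track the paper's argument in all essentials. The one deviation in part 2 is benign: you derive $\H^k \succeq \ell_s I$ directly from $s$-RSC applied at $\bu^k$ (perturbing along directions supported on $\Gamma_k$), whereas the paper transfers the bound from $\bx^*$ via $s$-RHLC and obtains only $\H^k \succeq (\ell_s/2) I$; both routes suffice for solvability and the descent condition. You do assert $\|\bv^k-\bu^k\|\to 0$ without deriving it (the paper gets it from $\bv^k_{\Gamma_k}-\bu^k_{\Gamma_k}=-(\H^k)^{-1}\bg^k_{\Gamma_k}$, $\nabla_{\Gamma_k}f(\bx^*)=0$, and $2s$-RSS), but this is a one-line fill.

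The genuine gap is the last step of part 3. You propose to bound $\|\bu^k-\bx^*\|$ in terms of $\|\bx^k-\bx^*\|$ by ``hard-thresholding nonexpansion to the $s$-sparse competitor $\bx^*$,'' i.e.\ $\|\bu^k-\by^k\|\le\|\bx^*-\by^k\|$ with $\by^k:=\bx^k-\alpha_k\nabla f(\bx^k)$, hence $\|\bu^k-\bx^*\|\le 2\|\by^k-\bx^*\|$. This does not work when $\|\bx^*\|_0=s$: in that case only $\nabla_{T_*}f(\bx^*)=0$ and typically $\nabla_{\overline T_*}f(\bx^*)\ne 0$, so
\begin{equation*}
\by^k-\bx^* \;\longrightarrow\; \bx^*-\alpha_*\nabla f(\bx^*)-\bx^* = -\alpha_*\nabla f(\bx^*),
\end{equation*}
whose $\overline T_*$-block has norm $\alpha_*\|\nabla_{\overline T_*}f(\bx^*)\|>0$. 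Thus $\|\by^k-\bx^*\|$ stays bounded away from zero and the nonexpansion estimate is vacuous — it cannot produce $\|\bu^k-\bx^*\|\lesssim\|\bx^k-\bx^*\|$. The paper avoids this by not using nonexpansion at all: after support identification (\ref{support-identify}), since $\supp(\bu^k)\subseteq\Gamma_k$ is exactly the index set that $\Pi_s$ retains (or $\bu^k=\by^k$ when $\|\bu^k\|_0<s$), the hard-thresholding step is an identity on $\Gamma_k$, giving (\ref{uk-xk-Gammak}): $\bu^k_{\Gamma_k}=\bx^k_{\Gamma_k}-\alpha_k\nabla_{\Gamma_k}f(\bx^k)$, $\bu^k_{\overline\Gamma_k}=0$. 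Everything is then restricted to $\Gamma_k$, where the crucial cancellation $\nabla_{\Gamma_k}f(\bx^*)=0$ is available (by (\ref{l1-KKT}) and (\ref{support-identify})), and one obtains $\|\bu^k-\bx^*\|\le(1+\tau L_{2s})\|\bx^k-\bx^*\|$ with constant exactly $1$, which is what the stated constant in (\ref{quadratic-rate-formula}) requires. Replace your nonexpansion step by this explicit restricted update formula and the proof closes.
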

\begin{proof}1) By Theorem \ref{global-convergence}, the whole sequence converges to $\bx^*$. For  case  $\|\bx^*\|_0=s$, suppose $\supp(\bx^*) 
\neq \supp(\bx^k) $ for sufficiently large $k$, then there is an $i$ such that $i\in \supp(\bx^*)$ but  $i\notin \supp(\bx^k)$ due to $\|\bx^k\|_0\leq s$ and $\|\bx^*\|_0=s$. This causes 
$$0< | x_i^*| \leq \|\bx^k-\bx^*\|\rightarrow 0,$$
a contradiction, thereby proving $\supp(\bx^*) 
\equiv \supp(\bx^k) $. Similarly, we can show  $\supp(\bx^*) 
\equiv \supp(\bu^k) \subseteq \Gamma_k $ by \eqref{def-gamma-k}, leading to $\supp(\bx^*)   = \Gamma_k $ due to $|\supp(\bx^*) |=|\Gamma_k|=s$. For the case of $\|\bx^*\|_0<s$,  similar reasoning allows for deriving $\supp(\bx^*) 
\subseteq   \supp(\bx^k) $ and $ \supp(\bx^*) 
\subseteq \supp(\bu^k) \subseteq \Gamma_k$.

2)  The following statements are given for sufficiently large $k$ if no additional explanations are provided. Before we show the quadratic convergence, we need several facts. The updating rule, \eqref{Newton-descent-property}, for $\bv^k$ indicates that $\supp(\bv^k)\subseteq\Gamma_k$. This together with $\supp(\bu^k)\subseteq\Gamma_k$ and \eqref{support-identify} ensures the following observations,
\begin{eqnarray} \label{Gv-Gu} \arraycolsep=1.4pt\def\arraystretch{1.5}
 \begin{array}{lll}
	 \| \bu^k-\bx^* \|=\|(\bu^k-\bx^*)_{\Gamma_k}\|,\qquad
	  \| \bu^k-\bv^k \|=\|(\bu^k-\bv^k)_{\Gamma_k}\|.
	  \end{array}
\end{eqnarray}
It follows conditions \eqref{l1-KKT} and \eqref{support-identify} that
\begin{eqnarray}\label{b*-gamma}
\nabla_{\Gamma_k}f(\bx^*)=0.
\end{eqnarray}	
The assumptions that $f$ is $2s$-RSS with $L_{2s}$ and  $s$-RSC with $\ell_s$ give us
\begin{eqnarray}\label{H-bounded}
 \arraycolsep=1.4pt\def\arraystretch{1.5}
 \begin{array}{llll}
\lambda_{\max}( \nabla^2_{T,T} f(\bx)) &\leq& L_{2s}, &\forall~ |T|=s, T\supseteq\supp(\bx), \forall~ \bx\in S,\\
 \lambda_{\min}( \nabla^2_{T,T} f(\bx^*)) &\geq& \ell_s, &\forall ~|T|=s, T\supseteq\supp(\bx^*). 
 \end{array}
\end{eqnarray}
where $\lambda_{\max}(H)$ and $\lambda_{\min}(H)$ stand for the largest and smallest eigenvalues of a symmetric matrix $H$. {Recalling the definition of  $s$-RHLC in  \eqref{r-rhlc}, we take $\eta:=\ell_s/(2M_s^*)$. For sufficiently large $k$, $\|\bu^k-\bx^*\|\leq\eta$ due to $\bu^k\to\bx^*$ and hence
\begin{eqnarray}\label{H-u-bounded}
 \arraycolsep=1.4pt\def\arraystretch{1.5}
 \begin{array}{llll}
 \lambda_{\min}( \H^k) &\geq& \lambda_{\min}(\nabla^2_{\Gamma_k,\Gamma_k} f(\bx^*)) - \| \H^k - \nabla^2_{\Gamma_k,\Gamma_k} f(\bx^*)\|\\
 &\geq& \ell_s- M^*_s\|\bu^k-\bx^*\|  \geq \ell_s/2.\qquad (\text{by $s$-RHLC and \eqref{H-bounded}})\\
  \end{array}
\end{eqnarray}}
 By letting  $\bu^k_t:=t\bu^k+(1-t)\bu^*$ for $t\in[0,1]$, it follows $\supp(\bu^k_t) \subseteq \Gamma_k$ and thus 
\begin{eqnarray}\label{v-u-00}
 \arraycolsep=1.4pt\def\arraystretch{1.5}
 \begin{array}{lllr}
\|\nabla_{\Gamma_k}f(\bu^k)-\nabla_{\Gamma_k}f(\bx^*)\|
  &=& \|\int_0^1\nabla^2_{\Gamma_k:}f(\bu^k_t)(\bu^k-\bx^*)dt\|&(\text{by Mean Value Theory}) \\
   &=& \|\int_0^1\nabla^2_{\Gamma_k\Gamma_k}f(\bu^k_t)(\bu^k-\bx^*)_{\Gamma_k} dt\|&(\text{by $\supp(\bx^*) \subseteq \Gamma_k$}) \\
 &\leq& L_{2s}\|(\bu^k-\bx^*)_{\Gamma_k}\|&(\text{by $\supp(\bu^k_t) \subseteq \Gamma_k$ and \eqref{H-bounded}})\\
 &=& L_{2s}\|\bu^k-\bx^* \|.&(\text{by \eqref{Gv-Gu}}) 
 \end{array}
\end{eqnarray}

Now, we are ready to conclude the concision.  Firstly, the two relations in \eqref{two-cases} are true. In fact, the limit satisfies $\|\bx^*\|_0 \leq s$. If $\|\bx^*\|_0 =s$, then the first condition  in \eqref{two-cases} can be guaranteed by \eqref{support-identify} immediately. If $\|\bx^*\|_0<s$, we have $\nabla f(\bx^*)=0$ by \eqref{l1-KKT}, which together with $ \bu^k \rightarrow \bx^*$ leads to $\bg^k=\nabla f(\bu^k)\rightarrow 0$, thereby ensuring the second condition  in \eqref{two-cases}. Therefore,  conditions \eqref{Newton-switch-on} for switching on Newton steps are satisfied for sufficiently large $k$.  

Secondly, the following fact,
\begin{eqnarray}\label{v-u-0}
 \arraycolsep=1.4pt\def\arraystretch{1.5}
 \begin{array}{lllr}
  \|\bv^{k} -\bu^{k}\|&=&\|(\bv^{k} -\bu^{k})_{\Gamma_k}\|&(\text{by \eqref{Gv-Gu}})\\ 
  &=&\|( \H^k)^{-1} \nabla_{\Gamma_k}f(\bu^k)\|&(\text{by  \eqref{Newton-descent-property}})\\ 
   &=&\|( \H^k)^{-1} (\nabla_{\Gamma_k}f(\bu^k)-\nabla_{\Gamma_k}f(\bx^*))\| &(\text{by \eqref{b*-gamma}}) \\
 &\leq& (2/\ell_s)\|\nabla_{\Gamma_k}f(\bu^k)-\nabla_{\Gamma_k}f(\bx^*)\|&(\text{by \eqref{H-u-bounded}}) \\
 &\leq& (2L_{2s}/\ell_s)\| \bu^k-\bx^*\|&(\text{by \eqref{v-u-00}}) \\
 & \rightarrow& 0,&(\text{by $ \bu^k \rightarrow \bx^*$  }) 
 \end{array}
\end{eqnarray}
allows us to deliver the chain of inequalities,
\begin{eqnarray}\label{v-u-0-0}
 \arraycolsep=1.4pt\def\arraystretch{1.5}
\begin{array}{lllr}
2f(\bv^{k})-2f(\bu^{k})
&=& 2\langle  \bg^{k}, \bv^{k} -\bu^{k}\rangle+  \langle \nabla^2f(\bu^{k}) (\bv^{k} -\bu^{k}), \bv^{k} -\bu^{k} \rangle + o(\|\bv^{k} -\bu^{k}\|^2)\\
&=& 2\langle \bg^{k}_{\Gamma_k}, (\bv^{k} -\bu^{k})_{\Gamma_k} \rangle+  \langle \H^{k} (\bv^{k} -\bu^{k})_{\Gamma_k}, (\bv^{k} -\bu^{k})_{\Gamma_k} \rangle + o(\|\bv^{k} -\bu^{k}\|^2)&(\text{by \eqref{Gv-Gu}})\\
&=& -\langle \H^{k} (\bv^{k} -\bu^{k})_{\Gamma_k}, (\bv^{k} -\bu^{k})_{\Gamma_k} \rangle + o(\|\bv^{k} -\bu^{k}\|^2)&(\text{by \eqref{Newton-descent-property}})\\
&\leq&  - (\ell_s/2) \|(\bv^{k} -\bu^{k})_{\Gamma_k}\|^2 + o(\|\bv^{k} -\bu^{k}\|^2)  &(\text{by \eqref{H-u-bounded}}) \\
&\leq&  - (\ell_s/4) \| \bv^{k} -\bu^{k}\|^2 ~&(\text{by \eqref{Gv-Gu}, \eqref{v-u-0}})\\
&\leq& -\sigma \|\bv^{k} -\bu^{k}\|^2. &(\text{by   $0<\sigma\leq\ell_s/4$})
\end{array}
\end{eqnarray}
Overall, Newton steps always are admitted  for sufficiently large $k$. 
 
3)  Recall \eqref{notation-k-1} that $\bu^k  \in  \Pi_{s}(\bx^{k}- \alpha_k \nabla f(\bx^k) ) 
$ and $\supp(\bu^k)\subseteq\Gamma_k$. If $\|\bu^k\|_0=s$, then $\supp(\bu^k)=\Gamma_k$ due to $|\supp(\bu^k)|=|\Gamma_k|=s$, resulting in following relationship 
\begin{eqnarray} \label{uk-xk-Gammak}
	 \arraycolsep=1.4pt\def\arraystretch{1.5}
\begin{array}{lllr}\bu^k_{\Gamma_k}=\bx^{k}_{\Gamma_{k}}-\alpha_k \nabla_{\Gamma_k} f(\bx^k),\qquad \bu^k_{\overline\Gamma_k}=0\end{array}
\end{eqnarray}
If  $\|\bu^k\|_0<s$, condition $\bu^k  \in  \Pi_{s}(\bx^{k}- \alpha_k \nabla f(\bx^k) )$ indicates $\|\bx^{k}- \alpha_k \nabla f(\bx^k)\|_0<s$ and hence $\bu^k=\bx^{k}- \alpha_k \nabla f(\bx^k)$. Then the above relationship is also valid.  Consequently, it follows
\begin{eqnarray} \label{uk-x*}
	 \arraycolsep=1.4pt\def\arraystretch{1.5}
\begin{array}{lllr}
\|\bu^{k}-\bx^*\|  
&=& \|\bu^{k}_{\Gamma_{k}} - \bx^*_{\Gamma_{k}}\| 
= \|\bx^{k}_{\Gamma_{k}}-\alpha_k \nabla_{\Gamma_k} f(\bx^k) - \bx^*_{\Gamma_{k}}\| &(\text{by \eqref{Gv-Gu}})\\
 &=& \|\bx^{k}_{\Gamma_{k}}-\alpha_k \nabla_{\Gamma_k} f(\bx^k) - \bx^*_{\Gamma_{k}}-\alpha_k \nabla_{\Gamma_k} f(\bx^{*}) \| &(\text{by   \eqref{b*-gamma}})\\
&\leq& \|\bx^{k}_{\Gamma_{k}}- \bx^*_{\Gamma_{k}}\|+\alpha_k \|\nabla_{\Gamma_k} f(\bx^k) -  \nabla_{\Gamma_k} f(\bx^{*}) \|\\
&\leq& \|\bx^{k} - \bx^* \|+    \tau  L_{2s} \| \bx^{k} - \bx^* \| &(\text{by  $0< \alpha_k\leq  \tau $,  \eqref{v-u-00}})\\
&=& (1+  \tau  L_{2s})\|\bx^{k} - \bx^* \|.  
\end{array}
\end{eqnarray}
Finally, since Newton steps always are taken as $\bx^{k+1}=\bv^{k}$  for sufficiently large $k$, we obtain 
\begin{eqnarray*} 
	 \arraycolsep=1.4pt\def\arraystretch{1.5}
\begin{array}{lllr}
\|\bx^{k+1}-\bx^*\| &=& \|\bv^{k}-\bx^*\| = \|\bv^{k}_{\Gamma_k}-\bx^*_{\Gamma_k}\| &(\text{by \eqref{Gv-Gu}})\\
&=& \|\bv^{k}_{\Gamma_k}-\bu^{k}_{\Gamma_k}+\bu^{k}_{\Gamma_k}-\bx^*_{\Gamma_k}\|  \\
&=& \|( \H^k)^{-1} (\nabla_{\Gamma_k}f(\bu^k)-\nabla_{\Gamma_k}f(\bx^*))+\bu^{k}_{\Gamma_k}-\bx^*_{\Gamma_k}\|  &(\text{by  \eqref{v-u-0}}) \\
& \leq& (2/\ell_s) \| \nabla_{\Gamma_k}f(\bu^k)-\nabla_{\Gamma_k}f(\bx^*) +\H^k(\bu^{k}_{\Gamma_k}-\bx^*_{\Gamma_k})\| &(\text{by  \eqref{H-u-bounded}}) \\
& =& (2/\ell_s) \| \int_0^1(\nabla^2_{\Gamma_k \Gamma_k}f(\bu^k_t) - \H^k)(\bu^{k}_{\Gamma_k}-\bx^*_{\Gamma_k}) dt\| &(\text{by \eqref{v-u-00}})    \\

& \leq & (2/\ell_s)   \int_0^1\|\nabla^2_{\Gamma_k \Gamma_k}f(\bu^k_t) - \H^k\|\cdot\|\bu^{k}_{\Gamma_k}-\bx^*_{\Gamma_k}\| dt     \\
& \leq & (2/\ell_s)  \int_0^1 M^*_s\|\bu^k_t-\bu^k\|\cdot\|\bu^{k}_{\Gamma_k}-\bx^*_{\Gamma_k}\| dt   &(\text{by   $s$-RHLC})  \\
& = & (2M^*_ s/\ell_s) \| \bu^k -\bx^*  \|\cdot\|\bu^{k}_{\Gamma_k}-\bx^*_{\Gamma_k}\| \int_0^1(1-t) dt    \\
& = & (M^*_ s/\ell_s) \| \bu^k -\bx^*\|^2,&(\text{by \eqref{Gv-Gu}}) 
\end{array}
\end{eqnarray*}  
which together with \eqref{uk-x*} draws the conclusion immediately. 
\end{proof}
 
\begin{remark} It is worth pointing out that if $\|\bx^*\|_0=s$, then the support of $\bx^k$ will be identified eventually, namely, $\bx^k\in \Omega:=\{\bx\in\R^n: \supp(\bx)=\supp(\bx^*)\}.$ In this regard, the quadratic convergence in Theorem \ref{quadratic-lemma} iii) reduces to the general case convergence
of Newton algorithms but on the subspace $\Omega$. However, if $\|\bx^*\|_0<s$,  there is no theoretic guarantee to find a common subspace that can identify the support of all $\bx^k$. Therefore, the quadratic convergence in iii) is different from  the general case convergence
of Newton algorithms and thus is non-trivial.
\end{remark}
Results in Theorem  \ref{quadratic-lemma} are established for sufficiently large $k$. In the sequel, we aim to show that such a $k$ has a finite lower bound. Then all results in Theorem  \ref{quadratic-lemma}  will hold for any $k$ greater than this bound.   To proceed with that, we need the following lemma with stronger assumptions.
\begin{lemma}\label{theorem-explicit-k0} Suppose $f$ is  $2s$-RSS  with $L_{2s}>0$ and  $2s$-RSC with $\ell_{2s}>0$. Let  $\{\bx^ k \}$ be the sequence generated by GPNP with setting $\sigma\in(0, 6/(\underline{\alpha}^2\ell_{2s}))$  and $\bx^*$ be its limit. Denote $\rho:=1- {\sigma \underline{\alpha}^2\ell_{2s}}/{6} \in(0,1)$. Then  for any $k\geq 0$, 
 \begin{eqnarray} \label{rate-xk-uk-x*}
 \arraycolsep=1.4pt\def\arraystretch{1.5}
\begin{array}{lll}
 \ell_{2s}\|\bx^k-\bx^*\|  &\leq &  2   \|\nabla_{T_k  \setminus T_*} f(\bx^*)\| +  \rho^{k/2}  \sqrt{2\ell_{2s}(f(\bx^{0}) - f(\bx^*))} ,\\
 \ell_{2s} \|\bu^k-\bx^*\| &\leq &   2  \|\nabla_{\Gamma_k  \setminus T_*} f(\bx^*)\| +  \rho^{k/2}  \sqrt{2\ell_{2s}(f(\bx^{0}) - f(\bx^*))},
\end{array}
\end{eqnarray}
where $T_k$ is any index set satisfying $T_k\supseteq \supp(\bx^k)$ and $|T_k|=s.$
\end{lemma}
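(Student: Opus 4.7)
The plan is to split the proof into a $2s$-RSC based quadratic inequality that converts the objective gap into the iterate error, and a linear convergence of the objective gap $f(\bx^k) - f(\bx^*)$; combining them yields the two claimed bounds.

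\textbf{Phase 1: $2s$-RSC-based quadratic inequality.} I would apply the $2s$-RSC inequality \eqref{r-rsc} at $\bx^*$ to the $s$-sparse iterate $\bx^k$ (so $\|\bx^k-\bx^*\|_0\leq 2s$):
\[
f(\bx^k)-f(\bx^*)\geq \langle\nabla f(\bx^*),\bx^k-\bx^*\rangle+(\ell_{2s}/2)\|\bx^k-\bx^*\|^2.
\]
Because $\bx^*$ is the limit of $\{\bx^k\}$, Lemma~\ref{theo05}(3) says it is an $\alpha$-stationary point, so by \eqref{l1} $\nabla_{T_*}f(\bx^*)=0$. Since $\supp(\bx^k-\bx^*)\subseteq T_k\cup T_*$, the linear term collapses to $\langle\nabla_{T_k\setminus T_*}f(\bx^*),(\bx^k-\bx^*)_{T_k\setminus T_*}\rangle$, bounded below by $-\|\nabla_{T_k\setminus T_*}f(\bx^*)\|\cdot\|\bx^k-\bx^*\|$. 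Solving the resulting quadratic in $a=\|\bx^k-\bx^*\|$ and using $\sqrt{b^2+c}\leq b+\sqrt{c}$ for $b,c\geq 0$ gives
\[
\ell_{2s}\|\bx^k-\bx^*\|\leq 2\|\nabla_{T_k\setminus T_*}f(\bx^*)\|+\sqrt{2\ell_{2s}(f(\bx^k)-f(\bx^*))}.
\]

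\textbf{Phase 2: Geometric decay of the objective gap.} The goal is to show $f(\bx^{k+1})-f(\bx^*)\leq \rho(f(\bx^k)-f(\bx^*))$ for $\rho=1-\sigma\underline{\alpha}^2\ell_{2s}/6$. By the sufficient-decrease inequalities \eqref{zk-alpha-zk} and \eqref{decrese-pro}, $f(\bx^k)-f(\bx^{k+1})\geq (\sigma/2)\|\bu^k-\bx^k\|^2$, so it suffices to establish the Polyak--\L{}ojasiewicz-style bound $\|\bu^k-\bx^k\|^2\geq (\underline{\alpha}^2\ell_{2s}/3)(f(\bx^k)-f(\bx^*))$. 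For that I would exploit the hard-thresholding optimality of $\bu^k$ with test point $\bw=\bx^*$,
\[
2\alpha_k\langle\nabla f(\bx^k),\bu^k-\bx^*\rangle+\|\bu^k-\bx^k\|^2\leq \|\bx^*-\bx^k\|^2,
\]
split $\langle\nabla f(\bx^k),\bu^k-\bx^*\rangle=\langle\nabla f(\bx^k),\bu^k-\bx^k\rangle+\langle\nabla f(\bx^k),\bx^k-\bx^*\rangle$, bound the first summand below by $2s$-RSS and the second below by $2s$-RSC, and then reuse the Phase~1 quadratic to trade $\|\bx^k-\bx^*\|^2$ for $f(\bx^k)-f(\bx^*)$ via Young's inequality. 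The step-size lower bound $\alpha_k\geq\underline{\alpha}$ from Lemma~\ref{theo05}(1) gives the explicit constant. Iterating then yields $f(\bx^k)-f(\bx^*)\leq \rho^k(f(\bx^0)-f(\bx^*))$.

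\textbf{Phase 3: Assembly and the $\bu^k$ bound.} Plugging the Phase~2 decay into the Phase~1 inequality immediately delivers the first claimed bound. For the second bound, I would rerun Phase~1 with $\bu^k$ in place of $\bx^k$ and $\Gamma_k\supseteq\supp(\bu^k)$ in place of $T_k$, then use $f(\bu^k)\leq f(\bx^k)$ from \eqref{zk-alpha-zk} to inherit the same $\rho^{k}$ factor, producing the $\rho^{k/2}$ prefactor in front of $\sqrt{2\ell_{2s}(f(\bx^0)-f(\bx^*))}$.

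The hard part will be Phase~2: bridging from $\langle\nabla f(\bx^k),\bu^k-\bx^*\rangle$ to $f(\bx^k)-f(\bx^*)$ while tracking constants carefully so that the contraction factor is exactly $1-\sigma\underline{\alpha}^2\ell_{2s}/6$ with no spurious dependence on $L_{2s}$. I expect this will require introducing a free parameter in the Young's inequality splitting and tuning it so that the two squared-norm terms $\|\bx^*-\bx^k\|^2$ and $\|\bu^k-\bx^k\|^2$ that appear on the right-hand side can be separated cleanly, with the factor $1/6$ arising from the optimal balance.
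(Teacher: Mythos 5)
Your Phases~1 and~3 coincide with the paper's: the paper also obtains the quadratic inequality
$f(\bx^k)\geq f(\bx^*)-\|\nabla_{T_k\setminus T_*}f(\bx^*)\|\cdot\|\bx^k-\bx^*\|+(\ell_{2s}/2)\|\bx^k-\bx^*\|^2$
from $2s$-RSC at $\bx^*$ together with $\nabla_{T_*}f(\bx^*)=0$, solves the scalar quadratic, and for $\bu^k$ repeats the same argument after substituting $f(\bu^k)\leq f(\bx^k)$. Your Phase~2 also targets exactly the right PL inequality
$\|\bu^k-\bx^k\|^2\geq(\underline\alpha^2\ell_{2s}/3)(f(\bx^k)-f(\bx^*))$,
which is equivalent to the conjunction of the paper's \eqref{decrese-pro-grad} and \eqref{decrese-pro-1}. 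So the skeleton is right.

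The gap is the route you propose for Phase~2, and you were right to flag it as the hard part. The hard-thresholding optimality inequality with test point $\bw=\bx^*$ reads, after rearrangement,
$\|\bu^k-\bx^k\|^2\leq\|\bx^*-\bx^k\|^2-2\alpha_k\langle\nabla f(\bx^k),\bu^k-\bx^*\rangle$,
which is an \emph{upper} bound on $\|\bu^k-\bx^k\|^2$, pointing in the wrong direction for a PL \emph{lower} bound. After you split the inner product and lower-bound $\langle\nabla f(\bx^k),\bu^k-\bx^k\rangle$ by $2s$-RSS, you inevitably import an $L_{2s}$ coefficient in front of $\|\bu^k-\bx^k\|^2$; since $\alpha_k$ can be as large as $\tau$, the resulting prefactor $1-\alpha_kL_{2s}$ has no sign control, and there is no Young-tuning that removes $L_{2s}$ from the final contraction factor. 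The lemma's $\rho=1-\sigma\underline\alpha^2\ell_{2s}/6$ must be free of $L_{2s}$, so this approach cannot close.

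What the paper does instead is entirely entry-wise: (i)~on $\Gamma_k$ the projection takes the full gradient step, $\bu^k_{\Gamma_k}=\bx^k_{\Gamma_k}-\alpha_k\nabla_{\Gamma_k}f(\bx^k)$ and $\bu^k_{\overline\Gamma_k}=0$, giving the exact decomposition $\|\bu^k-\bx^k\|^2=\alpha_k^2\|\nabla_{\Gamma_k}f(\bx^k)\|^2+\|\bx^k_{T_k\setminus\Gamma_k}\|^2$; (ii)~the selected entries dominate in magnitude, $|(\bx^k-\alpha_k\nabla f(\bx^k))_i|\geq|(\bx^k-\alpha_k\nabla f(\bx^k))_j|$ for $i\in\Gamma_k$, $j\in\overline\Gamma_k$. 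From (ii) together with a Peter--Paul inequality (parameter $t=2$, whence the constant $3$) the paper deduces $\|\bu^k-\bx^k\|^2\geq(\alpha_k^2/3)\|\nabla_{\Gamma_k\cup T_k}f(\bx^k)\|^2$, and from the same ordering also $\|\nabla_{\Gamma_k\setminus T_k}f(\bx^k)\|\geq\|\nabla_{T_*\setminus T_k}f(\bx^k)\|$. It then closes the loop without ever invoking RSS: $2s$-RSC at $\bx^k$, evaluated at $\bz=\bx^*$, plus a completion-of-squares gives $f(\bx^k)-f(\bx^*)\leq(1/\ell_{2s})\|\nabla_{T_k\cup T_*}f(\bx^k)\|^2\leq(1/\ell_{2s})\|\nabla_{\Gamma_k\cup T_k}f(\bx^k)\|^2$. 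Your proposal discards the index-wise information that drives both key lower bounds; recovering them from the single scalar optimality inequality against the single test point $\bx^*$ is not possible. The fix is to work through the restricted gradient norm $\|\nabla_{\Gamma_k\cup T_k}f(\bx^k)\|$ as the intermediary rather than expanding the cross-term.
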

\begin{proof} It is clear to see that $\rho \in(0,1)$ due to $\sigma\in(0, 6/(\underline{\alpha}^2\ell_{2s}))$. Let $T_k$ be an index set  satisfying
\begin{eqnarray}\label{def-Tk}
 \arraycolsep=1.4pt\def\arraystretch{1.5}
 \begin{array}{lllr}
T_k\supseteq \supp(\bx^k), ~~|T_k|=s.
 \end{array}
\end{eqnarray}
Since $2s$-RSC implies  $s$-RSC , $\bx^*$ is a unique local minimizer from Theorem \ref{global-convergence}, which by \eqref{l1-KKT} shows
 \begin{eqnarray}\label{cond-x*-1}
 \arraycolsep=1.4pt\def\arraystretch{1.5}
\begin{array}{lll}
\nabla_{T_*} f(\bx^{*}) = 0.
\end{array}
\end{eqnarray}
In addition, Lemma \ref{theo05} ii) states that $\{f(\bx^k)\}$ is a strictly decreasing sequence, and hence 
 \begin{eqnarray}\label{cond-x*-2}
 \arraycolsep=1.4pt\def\arraystretch{1.5}
\begin{array}{lll}
f(\bx^k) - f(\bx^{*}) \geq 0, \qquad \forall k\geq 0.
\end{array}
\end{eqnarray}
By $\bu^k=\bx^k(\alpha_k)$ and \eqref{uk-xk-Gammak},  we have
\begin{eqnarray}\label{xk1-xk}
 \arraycolsep=1.4pt\def\arraystretch{1.5}
 \bu^k-\bx^k =\bx^k(\alpha_k)-\bx^k=\left[ \begin{array}{c}
 -\alpha_k \nabla_{\Gamma_k} f(\bx^k)\\
-\bx^k_{\overline\Gamma_k}
 \end{array}\right]=\left[ \begin{array}{c}
 -\alpha_k \nabla_{\Gamma_k} f(\bx^k)\\
-\bx^k_{T_k\setminus\Gamma_k}\\
0
 \end{array}\right].
\end{eqnarray}
Moreover, as operator $\Pi_{s}$ keeps the $s$ largest elements (in magnitude) and $\bu^k=\bx^k(\alpha_k) \in  \Pi_{s}(\bx^{k}-\alpha_k \nabla f(\bx^k)),$  it has
\begin{eqnarray*} 
\forall~i\in\supp(\bu^k),\qquad  |(\bx^{k}-\alpha_k \nabla f(\bx^k))_i| \geq |(\bx^{k}-\alpha_k \nabla f(\bx^k))_j|,\qquad \forall~j\notin \supp(\bu^k).
\end{eqnarray*}
If $\|\bu^k\|_0=s$, then $\Gamma_k=\supp(\bu^k)$ due to $\Gamma_k\supseteq{\rm supp}(\bu^k)$ from \eqref{def-gamma-k}. Consequently, the above condition turns to
\begin{eqnarray} \label{i-j-gammak}
\forall~i\in\Gamma_k,\qquad  |(\bx^{k}-\alpha_k \nabla f(\bx^k))_i| \geq |(\bx^{k}-\alpha_k \nabla f(\bx^k))_j|,\qquad \forall~j\in \overline\Gamma_k 
\end{eqnarray}
If $\|\bu^k\|_0<s$, then  $\|\bx^{k}-\alpha_k \nabla f(\bx^k)\|_0<s$, this means $(\bx^{k}-\alpha_k \nabla f(\bx^k))_j=0$ for any $j\notin \supp(\bu^k)$. Therefore, \eqref{i-j-gammak} is still true for $\|\bu^k\|_0<s$ because of $\Gamma_k\supseteq{\rm supp}(\bu^k)$. It is noted that $|\Gamma_k| = |T_k| =s$ from \eqref{def-gamma-k} and \eqref{def-Tk}, thereby showing $|\Gamma_k\setminus T_k| = |T_k\setminus \Gamma_k|=s-|\Gamma_k\cap T_k|$.   This   allows  us to derive that
\begin{eqnarray*} 
 \arraycolsep=1.4pt\def\arraystretch{1.5}
 \begin{array}{lllll}
 \|(-\alpha_k \nabla f(\bx^k))_{\Gamma_k\setminus T_k}\|^2
 &=&\|(\bx^{k}-\alpha_k \nabla f(\bx^k))_{\Gamma_k\setminus T_k}\|^2&(\text{by $\eqref{def-Tk}$})\\
 &=&  \sum_{i\in \Gamma_k\setminus T_k}|(\bx^{k}-\alpha_k \nabla f(\bx^k))_i|^2   \\
 &  \geq&  \sum_{j\in T_k\setminus \Gamma_k}|(\bx^{k}-\alpha_k \nabla f(\bx^k))_j|^2&(\text{by $\eqref{i-j-gammak}$ and  $|\Gamma_k\setminus T_k| = |T_k\setminus \Gamma_k|$})\\
  &=& \|(\bx^{k}-\alpha_k \nabla f(\bx^k))_{T_k\setminus \Gamma_k}\|^2\\ 
 & \geq &  -2\|\bx^{k}_{T_k\setminus \Gamma_k}\|^2   +  (2/3)\|\alpha_k \nabla_{T_k\setminus \Gamma_k} f(\bx^k)\|^2,
\end{array}\end{eqnarray*}
where the last inequality used the fact  that $\|\bu-\bv\|^2\leq (1+1/t)\|\bu\|^2+(1+t)\|\bv\|^2$ for any $t>0$ and vectors $\bu,\bv$. Adding $\|(-\alpha_k \nabla f(\bx^k))_{\Gamma_k \cap  T_k}\|^2$ on the both sides of the above inequality yields
\begin{eqnarray*} 
 \arraycolsep=1.4pt\def\arraystretch{1.5}
 \begin{array}{lllll}
\|\alpha_k \nabla_{\Gamma_k} f(\bx^k)\|^2  
    \geq  -2\|\bx^{k}_{T_k\setminus \Gamma_k}\|^2 +(2/3) \|\alpha_k \nabla_{T_k} f(\bx^k)\|^2.
\end{array}\end{eqnarray*}
Now it follows from the above condition and \eqref{xk1-xk} that
\begin{eqnarray*} 
 \arraycolsep=1.4pt\def\arraystretch{1.5}
 \begin{array}{lllll}
\|\bu^k-\bx^k\|^2&=& \|\alpha_k \nabla_{\Gamma_k} f(\bx^k)\|^2+\|
\bx^k_{T_k\setminus\Gamma_k}\|^2\\
 &\geq& \|\alpha_k \nabla_{\Gamma_k} f(\bx^k)\|^2- (1/2)\|\alpha_k \nabla_{\Gamma_k} f(\bx^k)\|^2 +(1/3) \|\alpha_k \nabla_{T_k} f(\bx^k)\|^2\\
  &\geq& (\alpha_k^2/3)(\|  \nabla_{\Gamma_k} f(\bx^k)\|^2+ \|  \nabla_{T_k} f(\bx^k)\|^2)\\
  & \geq &  (\underline{\alpha}^2/3) \| \nabla_{\Gamma_k\cup T_k} f(\bx^k)\|^2,\qquad(\text{by $\eqref{alpha-k}$}) 
\end{array}\end{eqnarray*}
which together with \eqref{decrese-pro} gives rise to
 \begin{eqnarray}\label{decrese-pro-grad}
 \arraycolsep=1.4pt\def\arraystretch{1.5}
\begin{array}{lll}
	 \| \nabla_{\Gamma_k\cup T_k} f(\bx^k)\|^2	\leq  (3/{\underline{\alpha}^2})\|\bu^{k}-\bx^k\|^2 \leq  ({6}/({\sigma\underline{\alpha}^2}))(f(\bx^{k})  -f(\bx^{k+1})) .
\end{array}
\end{eqnarray}
Again by \eqref{i-j-gammak}, we can claim 
\begin{eqnarray*} 
 \arraycolsep=1.4pt\def\arraystretch{1.5}
\begin{array}{lll}
\forall~i\in\Gamma_k\setminus T_k,\qquad  |(-\alpha_k \nabla f(\bx^k))_i|&=&|(\bx^{k}-\alpha_k \nabla f(\bx^k))_i| \\
&\geq& |(\bx^{k}-\alpha_k \nabla f(\bx^k))_j|=|( \alpha_k \nabla f(\bx^k))_j|,\qquad \forall~j\in \overline\Gamma_k\setminus T_k. 
\end{array}
\end{eqnarray*}
This means the largest $| ( \nabla f(\bx^k))_i|, i\notin T_k$ are contained in $\Gamma_k\setminus T_k$. Therefore,   $\|  \nabla_{\Gamma_k\setminus T_k} f(\bx^k)\|^2 \geq \| \nabla_{T_*\setminus T_k} f(\bx^k)\|^2$ due to $|\Gamma_k|=s \geq |T_*|$.  Using this condition, $T_*=\supp(\bx^*), \Gamma_k\supseteq \supp(\bu^k)$,  and  $2s$-RSC with $\ell_{2s}>0$ suffices to
  \begin{eqnarray}\label{decrese-pro-1}
 \arraycolsep=1.4pt\def\arraystretch{1.5}
\begin{array}{lll}
f(\bx^*) &\geq& f(\bx^k)+\langle \nabla f(\bx^k), \bx^*-\bx^k \rangle	+(\ell_{2s}/2)\|\bx^k-\bx^*\|^2\\
& = & f(\bx^k)+\langle ( 1/\sqrt{\ell_{2s}}) \nabla_{T_k \cup T_*} f(\bx^k), \sqrt{\ell_{2s}}(\bx^*-\bx^k)_{T_k \cup T_*} \rangle+	(\ell_{2s}/2)\|(\bx^*-\bx^k)_{T_k \cup T_*}\|^2\\
& \geq & f(\bx^k)-  (1/\ell_{2s}) \|\nabla_{T_k \cup T_*} f(\bx^k)\|^2\\
& = & f(\bx^k)-  (1/\ell_{2s}) (\|\nabla_{T_k} f(\bx^k)\|^2+ \|\nabla_{T_*\setminus T_k} f(\bx^k)\|^2)\\
& \geq & f(\bx^k)-  (1/\ell_{2s}) (\|\nabla_{T_k} f(\bx^k)\|^2+ \|\nabla_{ \Gamma_k \setminus T_k} f(\bx^k)\|^2)\\
& = & f(\bx^k)-  (1/\ell_{2s}) (\|\nabla_{\Gamma_k\cup  T_k} f(\bx^k)\|^2\\
& \geq & f(\bx^k)-  (6/(\sigma \underline{\alpha}^2\ell_{2s}))(f(\bx^{k})  -f(\bx^{k+1})),\qquad (\text{by \eqref{decrese-pro-grad}})
\end{array}
\end{eqnarray}
which after simple manipulation displays
 \begin{eqnarray}\label{decrese-pro-grad-0}
 \arraycolsep=1.4pt\def\arraystretch{1.5}
\begin{array}{lll}
f(\bx^{k+1}) - f(\bx^*) \leq \rho (f(\bx^{k}) - f(\bx^*)) \leq \cdots \leq \rho^{k+1} (f(\bx^{0}) - f(\bx^*)) 
\end{array}
\end{eqnarray}
due to \eqref{cond-x*-2}.  
Again $2s$-RSC with $\ell_{2s}>0$ suffices to
  \begin{eqnarray} \label{f-rate-xk-x*}
 \arraycolsep=1.4pt\def\arraystretch{1.5}
\begin{array}{lll}
f(\bx^k) &\geq& f(\bx^*)+\langle \nabla f(\bx^*), \bx^k-\bx^* \rangle	+(\ell_{2s}/2)\|\bx^k-\bx^*\|^2\\
&=& f(\bx^*)+\langle   \nabla_{T_k \cup T_*} f(\bx^*),   (\bx^k-\bx^*)_{T_k \cup T_*} \rangle	+(\ell_{2s}/2)\|\bx^k-\bx^*\|^2\\
& \geq & f(\bx^*)-  \|\nabla_{T_k \cup T_*} f(\bx^*)\| \cdot\|   \bx^k-\bx^*\|+ (\ell_{2s}/2)\|\bx^k-\bx^*\|^2\\
& = & f(\bx^*)-  \|\nabla_{T_k  \setminus T_*} f(\bx^*)\|\cdot\|   \bx^k-\bx^*\|+ (\ell_{2s}/2)\|\bx^k-\bx^*\|^2,\qquad (\text{by \eqref{cond-x*-1}})
\end{array}
\end{eqnarray}
which together with \eqref{decrese-pro-grad-0} immediately brings 
 \begin{eqnarray} \label{rate-xk-x*}
 \arraycolsep=1.4pt\def\arraystretch{1.5}
\begin{array}{lll}
\|\bx^k-\bx^*\| &\leq & \frac{1}{\ell_{2s}}\left( \|\nabla_{T_k  \setminus T_*} f(\bx^*)\| + \sqrt{ \|\nabla_{T_k  \setminus T_*} f(\bx^*)\|^2+2\ell_{2s}(f(\bx^k) - f(\bx^*))}\right) \\
&\leq & \frac{1}{\ell_{2s}}\left(  2\|\nabla_{T_k  \setminus T_*} f(\bx^*)\|  + \sqrt{ 2\ell_{2s}(f(\bx^k) - f(\bx^*))}\right) \\
&\leq &\frac{2 }{\ell_{2s}}\|\nabla_{T_k  \setminus T_*} f(\bx^*)\| + \frac{\sqrt{2\ell_{2s}(f(\bx^{0}) - f(\bx^*))}}{ {\ell_{2s}}}  \rho^{k/2}.\qquad (\text{by \eqref{decrese-pro-grad-0}})
\end{array}
\end{eqnarray}
Similar reason to show \eqref{f-rate-xk-x*}  also allows us to obtain  \begin{eqnarray*}  
 \arraycolsep=1.4pt\def\arraystretch{1.5}
\begin{array}{lll}
f(\bu^k)  
& \geq & f(\bx^*)-  \|\nabla_{\Gamma_k  \setminus T_*} f(\bx^*)\|  \|\bu^k-\bx^*\|+ (\ell_{2s}/2)\|\bu^k-\bx^*\|^2,
\end{array}
\end{eqnarray*}
which by \eqref{rate-xk-x*} and $f(\bx^k)\geq f(\bu^k)$ from \eqref{fact-u-z-0} suffices to
 \begin{eqnarray} \label{rate-xk-u*}
 \arraycolsep=1.4pt\def\arraystretch{1.5}
\begin{array}{lll}
\|\bu^k-\bx^*\| \leq    \frac{2 }{\ell_{2S}}\|\nabla_{\Gamma_k  \setminus T_*} f(\bx^*)\| + \frac{\sqrt{2\ell_{2s}(f(\bx^{0}) - f(\bx^*))}}{ {\ell_{2s}}}  \rho^{k/2}.
\end{array}
\end{eqnarray}
Combining \eqref{rate-xk-x*} and \eqref{rate-xk-u*} finishes the whole proof.
\end{proof} 
Based on Lemma \ref{theorem-explicit-k0} and Theorem \ref{quadratic-lemma}, we can obtain the following corollary, where the assumptions are similar to  those in \cite[Corollary 3]{yuan2017gradient}.
\begin{corollary}Suppose $f$ is  $2s$-RSS  with $L_{2s}>0$ and  $2s$-RSC with $\ell_{2s}>0$. Let  $\{\bx^ k \}$ be the sequence generated by GPNP with setting $\sigma\in(0, \min\{\ell_{s}/4, 6/(\underline{\alpha}^2\ell_{2s})\})$. Denote 
 \begin{eqnarray} \label{k-lower-bd}
 \arraycolsep=1.4pt\def\arraystretch{1.5}
\begin{array}{llllll}
\qquad k(c) :=  \left \lfloor \log_\rho\left( \frac{c^2  }{2\ell_{2s}(f(\bx^{0}) - f(\bx^*))} \right) \right\rfloor, ~~c_1:=\ell_{2s}\min_{i\in T_*}|x_i^*|-2\|\nabla_{\overline T_*} f(\bx^*)\|_\infty,~~ c_2:= \min\left\lbrace \frac{\ell_{2s}^2}{2M_s^*},\frac{\ell_{2s}^3}{8L_{2s}M_s^*}, \frac{\epsilon\ell_{2s}}{L_{2s}}\right\rbrace,
\end{array}
\end{eqnarray}
where  $ \lfloor t \rfloor$ counts the largest integer no greater than $t+1$. Assume $c_1>0$. Then the following statements are true.
\begin{itemize}
\item[1)] The support set of $\bx^*$ can be identified by (\ref{support-identify}) for any $k>k(c_1)$; 
\item[2)] If we further assume $f$ is locally $s$-RHLC at $ \bx^*$  with $M_s^*>0$. Then for any $k>\max\{k(c_1),k(c_2)\}$, the Newton step is always admitted,  and the sequence satisfies (\ref{quadratic-rate-formula}) and thus converges to $\bx^*$ quadratically.  
\end{itemize}
\end{corollary}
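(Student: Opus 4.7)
The plan is to combine the asymptotic structural results of Theorem \ref{quadratic-lemma} with the quantitative decay estimate of Lemma \ref{theorem-explicit-k0}. The definition of $k(c)$ is engineered so that whenever $k>k(c)$ one has $\rho^{k/2}\sqrt{2\ell_{2s}(f(\bx^{0})-f(\bx^*))}<c$, and hence the Lemma reduces control of $\ell_{2s}\|\bx^k-\bx^*\|$ and $\ell_{2s}\|\bu^k-\bx^*\|$ to the residual gradient contributions $\|\nabla_{T_k\setminus T_*}f(\bx^*)\|$ and $\|\nabla_{\Gamma_k\setminus T_*}f(\bx^*)\|$, which are supported on indices outside $T_*$ and are therefore controlled by $\|\nabla_{\overline T_*}f(\bx^*)\|_{\infty}$.

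For item 1, I would pick $k>k(c_1)$, so that $\rho^{k/2}\sqrt{2\ell_{2s}(f(\bx^{0})-f(\bx^*))}<c_1=\ell_{2s}\min_{i\in T_*}|x_i^*|-2\|\nabla_{\overline T_*}f(\bx^*)\|_{\infty}$. Feeding this into Lemma \ref{theorem-explicit-k0} together with the residual-gradient bound yields the strict inequalities $\|\bx^k-\bx^*\|<\min_{i\in T_*}|x_i^*|$ and $\|\bu^k-\bx^*\|<\min_{i\in T_*}|x_i^*|$. When $\|\bx^*\|_0<s$, \eqref{l1-KKT} forces $\nabla f(\bx^*)=0$, so the gradient term vanishes identically and the same conclusion holds using only the assumption $c_1>0$. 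These are exactly the strict inequalities driving the cardinality/contradiction argument at the start of the proof of Theorem \ref{quadratic-lemma}, part 1, forcing $T_*\subseteq\supp(\bx^k)$ and $T_*\subseteq\supp(\bu^k)\subseteq\Gamma_k$, from which \eqref{support-identify} follows.

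For item 2, take $k>\max\{k(c_1),k(c_2)\}$. Item 1 already supplies the first branch of the Newton switch \eqref{Newton-switch-on} when $\|\bx^*\|_0=s$. For the complementary case $\|\bx^*\|_0<s$, since $\nabla f(\bx^*)=0$, the $2s$-RSS estimate used in \eqref{v-u-00} gives $\|\bg^k\|\leq L_{2s}\|\bu^k-\bx^*\|$, and the third component $\epsilon\ell_{2s}/L_{2s}$ of $c_2$ was chosen precisely to drive this below $\epsilon$, triggering the second branch. The first component $\ell_{2s}^2/(2M_s^*)$ of $c_2$ places $\bu^k$ inside the local $s$-RHLC neighbourhood, so that \eqref{H-u-bounded} gives $\lambda_{\min}(\H^k)\geq\ell_s/2$ and \eqref{Newton-descent-property} is uniquely solvable. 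The second component $\ell_{2s}^3/(8L_{2s}M_s^*)$ is tuned so that $\|\bv^k-\bu^k\|\leq(2L_{2s}/\ell_s)\|\bu^k-\bx^*\|$ from \eqref{v-u-0} is small enough for the third-order $s$-RHLC remainder in the expansion \eqref{v-u-0-0} to be dominated by the $\sigma\|\bv^k-\bu^k\|^2$ margin, using $\sigma\leq\ell_s/4$ as assumed; this verifies \eqref{Newton-descent-property-1}. Hence the Newton step is always admitted and Theorem \ref{quadratic-lemma}, part 3, delivers \eqref{quadratic-rate-formula}.

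The main obstacle is upgrading the asymptotic $o(\|\bv^k-\bu^k\|^2)$ remainder in the expansion \eqref{v-u-0-0} into a quantitative third-order bound. This requires integrating $s$-RHLC along the segment between $\bu^k$ and $\bv^k$, which lies in the $\Gamma_k$-subspace so the restricted Hessian-Lipschitz constant $M_s^*$ applies, and then chaining the resulting cubic remainder through the quadratic estimate $\|\bv^k-\bu^k\|\leq(2L_{2s}/\ell_s)\|\bu^k-\bx^*\|$ so that it becomes linear in $\|\bu^k-\bx^*\|^2$; matching the resulting coefficient against the $\sigma\leq\ell_s/4$ margin is precisely what pins down the $8L_{2s}M_s^*$ denominator in the second entry of $c_2$.
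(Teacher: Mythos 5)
Your item 2 follows essentially the same route as the paper: you identify the three branches of $c_2$ with (a) placing $\bu^k$ inside the $s$-RHLC neighbourhood so that \eqref{H-u-bounded} holds, (b) controlling $\|\bv^k-\bu^k\|$ so that the quadratic-remainder step in \eqref{v-u-0-0} can be made non-asymptotic via a mean-value expansion along the $\Gamma_k$-segment, and (c) forcing $\|\bg^k\|<\epsilon$ through $2s$-RSS. That is exactly the mechanism in the paper's proof (it replaces the $o(\|\bv^k-\bu^k\|^2)$ term by the explicit bound \eqref{vk-uk-bd}).

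There is, however, a genuine gap in item 1. You assert that $k>k(c_1)$ together with the Lemma estimate yields $\|\bx^k-\bx^*\|<\min_{i\in T_*}|x_i^*|$ "together with the residual-gradient bound". Unpacking this: the Lemma gives
$\ell_{2s}\|\bx^k-\bx^*\|\le 2\|\nabla_{T_k\setminus T_*}f(\bx^*)\|+\rho^{k/2}\sqrt{2\ell_{2s}(f(\bx^0)-f(\bx^*))}$, and with $k>k(c_1)$ the second term is $<c_1=\ell_{2s}\min_{i\in T_*}|x_i^*|-2\|\nabla_{\overline T_*}f(\bx^*)\|_\infty$. To reach $\|\bx^k-\bx^*\|<\min_{i\in T_*}|x_i^*|$ you would need $\|\nabla_{T_k\setminus T_*}f(\bx^*)\|\le\|\nabla_{\overline T_*}f(\bx^*)\|_\infty$, which is false when $|T_k\setminus T_*|>1$: the Euclidean norm of a subvector of length $p$ can be as large as $\sqrt p$ times the sup norm. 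Your shortcut is only valid in the $\|\bx^*\|_0<s$ case where $\nabla f(\bx^*)=0$ kills the gradient term. In the $\|\bx^*\|_0=s$ case the paper does not claim the pointwise bound you want; instead it assumes $T_*\setminus T_k\ne\emptyset$ and exploits the cardinality balance $|T_*\setminus T_k|=|T_k\setminus T_*|$ (both index sets have size $s$) so that the $\sqrt{p}$ that appears when converting $\min_{i\in T_*\setminus T_k}|x_i^*|$ to $\|(\bx^k-\bx^*)_{T_*\setminus T_k}\|/\sqrt p$ exactly cancels the $1/\sqrt p$ from converting $\|\nabla_{T_k\setminus T_*}f(\bx^*)\|_\infty$ to $\|\nabla_{T_k\setminus T_*}f(\bx^*)\|$, and one then chains to $c_1<c_1$ via the Lemma. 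Without this balancing step, your argument would require redefining $c_1$ with a $\sqrt{s}\,\|\nabla_{\overline T_*}f(\bx^*)\|_\infty$ penalty. Since item 2 depends on item 1, you should fix this before assembling the finite-index version of the quadratic-rate statement.
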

\begin{proof}
1) Suppose the support set of $\bx^*$ is not identified by \eqref{support-identify} when $k>k(c_1)$, that is, $T_*\setminus T_k\neq \emptyset$ or $T_*\setminus \Gamma_k\neq \emptyset$. We only consider case $T_*\setminus T_k\neq \emptyset$ as the proof for the latter is similar. Direct calculation yields that
 \begin{eqnarray*}
 \arraycolsep=1.4pt\def\arraystretch{1.5}
\begin{array}{llll}
c_1 &=&
\ell_{2s}\min_{i\in T_*}|x_i^*| -2\|\nabla_{\overline T_*} f(\bx^*)\|_\infty \\
&\leq&
\ell_{2s}\min_{i\in T_*\setminus T_k}|x_i^*| -2\|\nabla_{T_k\setminus T_*} f(\bx^*)\|_\infty \\
&\leq&   \frac{\ell_{2s}}{\sqrt{|T_*\setminus T_k|}}\|(\bx^k-\bx^*)_{T_*\setminus T_k}\| - \frac{2}{\sqrt{|T_k\setminus T_*|}} \|\nabla_{T_k \setminus T_*} f(\bx^*)\| & (\text{by \eqref{def-Tk}}) \\
&=&  \frac{1}{\sqrt{s-|T_k \cap T_*|}}   (\ell_{2s}\|(\bx^k-\bx^*)_{T_*\setminus T_k}\|-2\|\nabla_{T_k \setminus T_*} f(\bx^*)\|) \\
&\leq&  \ell_{2s}  \|(\bx^k-\bx^*)_{T_*\setminus T_k}\|-2\|\nabla_{T_k \setminus T_*} f(\bx^*)\| \\
&\leq&  \ell_{2s}  \| \bx^k-\bx^*\|-2\|\nabla_{T_k \setminus T_*} f(\bx^*)\| \\
&\leq &  \sqrt{2\ell_{2s}(f(\bx^{0}) - f(\bx^*))} \rho^{k/2}& (\text{by  \eqref{rate-xk-uk-x*}})\\
& < &  \sqrt{2\ell_{2s}(f(\bx^{0}) - f(\bx^*))} \rho^{k(c_1)/2} \leq c_1.& (\text{by $k>k(c_1)$}) 
\end{array}
\end{eqnarray*}
This is a contradiction and shows the conclusion.\\
2) Note that  $2s$-RSC with $\ell_{2s}>0$ implies  $s$-RSC with $\ell_{s}>0$ and $\ell_{s} \geq \ell_{2s}>0$. Thus all conditions in Theorem \ref{quadratic-lemma} hold.    To prove the results, we will show that for any $k>\max\{k(c_1),k(c_2)\}$, (i) condition \eqref{H-u-bounded} holds, (ii) conditions \eqref{Newton-switch-on} for switching on Newton steps are satisfied, and (iii) condition \eqref{v-u-0-0} holds.\\
(i) If $\|\bx^*\|_0<s$ then $\nabla f(\bx^{*})  =0$ by \eqref{l1-KKT}. If $\|\bx^*\|_0=s$ then claim 1) and (\ref{support-identify}) suffices to $T_*=\supp(\bx^*)\equiv \supp(\bx^k)\equiv \Gamma_k$ for any $k>k(c_1)$. Therefore, both cases indicates $\|\nabla_{T_k  \setminus T_*} f(\bx^*)\| = \|\nabla_{\Gamma_k  \setminus T_*} f(\bx^*)\|=0$ for any $k>k(c_1)$. Now, it follows from \eqref{rate-xk-u*} that, for any $k>\max\{k(c_1),k(c_2)\}$,
\begin{eqnarray} \label{u-x-kc2}
 \arraycolsep=1.4pt\def\arraystretch{1.5}
\begin{array}{lll}
\|\bu^k-\bx^*\| &\leq&    \frac{\sqrt{2\ell_{2s}(f(\bx^{0}) - f(\bx^*))}}{ {\ell_{2s}}}  \rho^{k/2} \\
&\leq&     \frac{\sqrt{2\ell_{2s}(f(\bx^{0}) - f(\bx^*))}}{ {\ell_{2s}}}  \rho^{k(c_2)/2} \\
&\leq&  \min\left\lbrace \frac{\ell_{2s}}{2M_s^*},\frac{\ell_{2s}^2}{8L_{2s}M_s^*}, \frac{\epsilon }{L_{2s}} \right\rbrace \leq \frac{\ell_{2s}}{2M_s^*},
\end{array}
\end{eqnarray} 
which allows us to derive \eqref{H-u-bounded}. \\
(ii) If follows from the first condition in \eqref{u-x-kc2} that for any $k>\max\{k(c_1),k(c_2)\}$, 
 \begin{eqnarray*}
 \arraycolsep=1.4pt\def\arraystretch{1.5}
\begin{array}{lllll}
\|\bg^k\| =\|\bg^k -\nabla f(\bx^{*})  \| 
& \leq&  L_{2s} \|\bu^k - \bx^*\| ~~&(\text{by $2s$-RSS})\\
&\leq&  L_{2s}  \frac{\epsilon }{L_{2s}} <\epsilon,  &(\text{by  \eqref{u-x-kc2}}) 
\end{array}
\end{eqnarray*}
which means  the second condition  in (\ref{Newton-switch-on}) is satisfied.\\
(iii) For any $k>\max\{k(c_1),k(c_2)\}$, we have the following inequalities
\begin{eqnarray} \label{vk-uk-l-M}
 \arraycolsep=1.4pt\def\arraystretch{1.5}
\begin{array}{lllll} 
  \|\bv^{k} -\bu^{k}\| 
 &\leq& (2L_{2s}/\ell_s)\| \bu^k-\bx^*\|\leq \frac{\ell_{2s}}{4M_s^*} .&\qquad(\text{by \eqref{v-u-0} and \eqref{u-x-kc2}})  
\end{array}
\end{eqnarray} 
Now, let  $\bu_t^k:=t\bu^k+(1-t)\bv^k$ with some $t\in(0,1)$. Then $\|\bu_t^k-\bu^k\|\leq (1-t)\|\bu^k-\bv^k\| \leq \|\bu^k-\bv^k\| \leq \frac{\ell_{2s}}{4M_s^*} \leq \frac{\ell_{s}}{2M_s^*}$. This means that $\bu_t^k,\bu^k\in S\cap N(\bx^*, \frac{\ell_{s}}{2M_s^*})$,  which by $s$-RHLC  and \eqref{vk-uk-l-M} renders that
\begin{eqnarray} \label{vk-uk-bd}
 \arraycolsep=1.4pt\def\arraystretch{1.5}
\begin{array}{lll}
\|\nabla^2_{\Gamma_k\Gamma_k}f(\bu_t^k)  -\H^k\| &\leq&  M_s^* \|\bu_t^k-\bu^k\|\leq  M_s^* \|\bv^k-\bu^k\| \leq \frac{\ell_{2s}}{4} \leq \frac{\ell_{s}}{4}.
\end{array}
\end{eqnarray} 
It follows from the Mean Value Theorem  that 
  for any $k>k(c_2)$,
\begin{eqnarray*}
 \arraycolsep=1.4pt\def\arraystretch{1.5}
\begin{array}{lllr}
2f(\bv^{k})-2f(\bu^{k})
&=& 2\langle  \bg^{k}, \bv^{k} -\bu^{k}\rangle+  \langle \nabla^2f(\bu_t^{k}) (\bv^{k} -\bu^{k}), \bv^{k} -\bu^{k} \rangle\\
&=& 2\langle \bg^{k}_{\Gamma_k}, (\bv^{k} -\bu^{k})_{\Gamma_k} \rangle+  \langle \H^{k} (\bv^{k} -\bu^{k})_{\Gamma_k}, (\bv^{k} -\bu^{k})_{\Gamma_k} \rangle\\
& + & \langle  (\nabla^2_{\Gamma_k\Gamma_k} f(\bu_t^{k})-\H^{k}) (\bv^{k} -\bu^{k})_{\Gamma_k}, (\bv^{k} -\bu^{k})_{\Gamma_k} \rangle  \\
&\leq& -\langle \H^{k} (\bv^{k} -\bu^{k})_{\Gamma_k}, (\bv^{k} -\bu^{k})_{\Gamma_k} \rangle +  ({\ell_{s}}/{4}) \|(\bv^{k} -\bu^{k})_{\Gamma_k}\|^2 &(\text{by \eqref{Newton-descent-property}, \eqref{vk-uk-bd}})\\
&\leq&  - (\ell_s/2) \|(\bv^{k} -\bu^{k})_{\Gamma_k}\|^2 + ({\ell_{s}}/{4}) \|(\bv^{k} -\bu^{k})_{\Gamma_k}\|^2 &(\text{by \eqref{H-u-bounded}}) \\
&=&  - (\ell_s/4) \| \bv^{k} -\bu^{k}\|^2 ~&(\text{by \eqref{Gv-Gu}})\\
&\leq& -\sigma \|\bv^{k} -\bu^{k}\|^2. &(\text{by   $0<\sigma\leq\ell_s/4$})
\end{array}
\end{eqnarray*}
Therefore, condition \eqref{v-u-0-0} holds for $k>\max\{k(c_1),k(c_2)\}$.\\
The rest proof is omitted as it is the same as that for Theorem \ref{quadratic-lemma} ii) and  iii). 
\end{proof}
\subsection{Application to CS}
To end this section, we would like to see the performance of GPNP for  CS problems with the objective function,
\begin{equation*}
f(\bx)= f_{cs}(\bx)=({1}/{2}) \|\A\bx-\bb\|^{2}.
\end{equation*} 
 The following corollary shows that all previously established convergence results  are valid for CS problems if we are given a mild assumption on $\A$. {The assumption is the so-called $s$-regularity (see \cite[Definition 2.2]{Beck13}). We say matrix $\A$ is $s$-regular if any its $s$ columns are linearly independent. It is easy to see that if $\A$ is $s$-regular, then \begin{eqnarray}\label{RIP-lambda}
\lambda_s:=  \min\left\{\lambda_{\min} (\A_{:T}^\top \A_{:T}): |T|=s\right\} >0.
\end{eqnarray} }
\begin{corollary}\label{cor-cs} Let  $\{\bx^ k \}$ be the sequence generated by GPNP for (\ref{sco}) with $f=f_{cs}$. {Suppose $\A$ is $s$-regular. Then }
for sufficiently large $k$, the following statements hold.
\begin{itemize}
\item[1)] The whole sequence converges to a unique local minimizer (say $\bx^*$). 
\item[2)] The support set of $\bx^*$ can be identified by (\ref{support-identify}).
\item[3)]  Newton steps are always admitted if we set $\sigma\in(0,\lambda_s/4]$.
\item[4)] $\bx^{k}=\bx^{*}$ {after finitely many $k$.}
\end{itemize}
\end{corollary}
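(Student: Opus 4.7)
My plan is to reduce the corollary to Theorems \ref{global-convergence} and \ref{quadratic-lemma} by verifying, under the single hypothesis of $s$-regularity of $\A$, that $f_{cs}$ enjoys the regularity properties ($2s$-RSS, $s$-RSC, and locally $s$-RHLC at $\bx^*$) required by those results. Claims 1)--3) will then follow directly from the previously established theorems, and claim 4) will be extracted from the quadratic rate \eqref{quadratic-rate-formula} by exploiting the fact that the Hessian $\A^\top \A$ of $f_{cs}$ is constant.

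First I would compute
\[
f_{cs}(\bz) - f_{cs}(\bx) - \langle \nabla f_{cs}(\bx), \bz - \bx\rangle = \tfrac{1}{2}\|\A(\bz-\bx)\|^2,
\]
and observe that for any two $s$-sparse vectors $\bx,\bz$ whose difference has support $S_d$ with $|S_d|\leq r$, one has $\|\A(\bz-\bx)\|^2 = (\bz-\bx)_{S_d}^\top (\A_{:S_d}^\top \A_{:S_d}) (\bz-\bx)_{S_d}$. Setting $L_{2s} := \max_{|T|=2s}\lambda_{\max}(\A_{:T}^\top \A_{:T})$ yields the $2s$-RSS property, while \eqref{RIP-lambda} delivers $\ell_s := \lambda_s > 0$ as the $s$-RSC constant. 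Since $\nabla^2 f_{cs}\equiv \A^\top \A$, the locally $s$-RHLC condition \eqref{r-rhlc} at $\bx^*$ holds trivially with any $M_s^*\geq 0$.

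With these regularities verified, claim 1) is immediate from Theorem \ref{global-convergence}, claim 2) follows from Theorem \ref{quadratic-lemma} part 1), and claim 3) follows from Theorem \ref{quadratic-lemma} part 2) upon identifying $\ell_s/4=\lambda_s/4$ in the admissible range of $\sigma$. For claim 4), I would appeal to the quadratic rate \eqref{quadratic-rate-formula} and note that because $\A^\top\A$ is constant, the RHLC constant may be taken as any $M_s^*\geq 0$. Once the right-hand side of \eqref{quadratic-rate-formula} is set with $M_s^*=0$, it vanishes identically, forcing $\bx^{k+1}=\bx^*$ for all sufficiently large $k$.

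The main obstacle is handling the degeneracy $M_s^*=0$ cleanly, since the proof of Theorem \ref{quadratic-lemma} set $\eta = \ell_s/(2M_s^*)$ and a naive substitution is not admissible. I would circumvent this either by invoking the quadratic bound with an arbitrary $M_s^*>0$ and then letting $M_s^*\downarrow 0$ (the threshold ``sufficiently large $k$'' stays bounded, because smaller $M_s^*$ enlarges the neighbourhood on which the local RHLC holds, so the argument is uniform), or, more directly, by retracing the one step in the proof of Theorem \ref{quadratic-lemma} where $M_s^*$ enters, namely the estimate $\|\nabla^2_{\Gamma_k\Gamma_k} f(\bu_t^k) - \H^k\|\le M_s^*\|\bu_t^k-\bu^k\|$. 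This quantity is identically zero for $f_{cs}$, which collapses the integral chain in the proof of \eqref{quadratic-rate-formula} and produces $\bx^{k+1}=\bx^*$ directly.
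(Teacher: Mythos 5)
Your proposal is correct, and claims 1)--3) are handled exactly as in the paper. For claim 4), however, the paper takes a cleaner and more elementary route than either of your two workarounds. Once the support has been identified (so $\Gamma_k$ is eventually the correct support), the first-order condition $\nabla_{\Gamma_k} f_{cs}(\bx^*)=0$ reads $\A_{:\Gamma_k}^\top(\A_{:\Gamma_k}\bx^*_{\Gamma_k}-\bb)=0$, i.e.\ $\H^k \bx^*_{\Gamma_k} = \A_{:\Gamma_k}^\top\bb$. But the Newton system \eqref{Newton-descent-property} is exactly $\H^k\bv^k_{\Gamma_k}=\A_{:\Gamma_k}^\top\bb$, and $\H^k=\A_{:\Gamma_k}^\top\A_{:\Gamma_k}$ is invertible by $s$-regularity, so $\bv^k_{\Gamma_k}=\bx^*_{\Gamma_k}$ and $\bx^{k+1}=\bv^k=\bx^*$ directly. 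No appeal to RHLC, to \eqref{quadratic-rate-formula}, or to any limiting argument $M_s^*\downarrow 0$ is needed. Your option (b) captures the same phenomenon (the integrand $\nabla^2_{\Gamma_k\Gamma_k}f_{cs}(\bu_t^k)-\H^k$ vanishes identically), and is essentially equivalent once unwound, but it requires re-opening the proof of Theorem \ref{quadratic-lemma}; the paper's argument is self-contained in three lines. Your option (a) is more fragile: the quantifier ``for sufficiently large $k$'' in Theorem \ref{quadratic-lemma} depends on $M_s^*$ through the radius $\eta=\ell_s/(2M_s^*)$, and while you are right that the dependence is monotone in the favourable direction (smaller $M_s^*$ enlarges the neighbourhood), you would still have to state explicitly that the threshold can be fixed once for, say, $M_s^*=1$ and then reused for all smaller $M_s^*$ before passing to the limit in the constant of \eqref{quadratic-rate-formula}. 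That extra bookkeeping is exactly what the paper's direct algebra avoids.
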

\begin{proof} We note that $f_{cs}$ is $2s$-RSS with $L_{2s}=\lambda_{\max}(\A^\top \A)$ and $s$-RSC with $\ell_s=\lambda_s$ if $\lambda_s>0$. Hence,  1) - 3) in Theorems \ref{global-convergence} and \ref{quadratic-lemma} are valid for CS immediately. We now prove 4). It follows conditions \eqref{l1-KKT} and \eqref{support-identify}  that
\begin{eqnarray}\label{b*-gamma-1}
\nabla_{\Gamma_k}f_{cs}(\bx^*)= \A^\top_{:\Gamma_k}(\A\bx^*-\bb)=  \A^\top_{:\Gamma_k}(\A_{:\Gamma_k}\bx^*_{\Gamma_k}-\bb)=0 
\end{eqnarray}	
for sufficiently large $k$. Moreover,  it is easy to see that  Newton  step  \eqref{Newton-descent-property} turns to
\begin{eqnarray}\label{Newton-descent-property-cs}
	  \H^k \bv^{k}_{\Gamma_k} =   \A_{:\Gamma_k}^\top \bb, ~~\bv^{k}_{\overline\Gamma_k}=0. 
\end{eqnarray}
where $\H^k:=\A_{:\Gamma_k}^\top \A_{:\Gamma_k}$.  The above two conditions as well as 2) that the Newton step  is always taken, namely, $\bx^{k+1}=\bv^{k}$, for sufficiently large $k$, result in the following chain of equalities,
\begin{eqnarray*} 
	 \arraycolsep=1.4pt\def\arraystretch{1.5}
\begin{array}{lllr}
\|\bx^{k+1}-\bx^*\| &=& \|\bv^{k}-\bx^*\| = \|\bv^{k}_{\Gamma_k}-\bx^*_{\Gamma_k}\| &(\text{by   \eqref{Gv-Gu}})\\
&=& \|(\H^k)^{-1}(\H^k\bv^{k}_{\Gamma_k}-\H^k\bx^*_{\Gamma_k} )\|&(\text{by $\lambda_s>0$}) \\
&=& \|(\H^k)^{-1}(\A_{:\Gamma_k}^\top \bb-\A_{:\Gamma_k}^\top \bb )\| &(\text{by  \eqref{Newton-descent-property-cs} and \eqref{b*-gamma-1}})\\
&=&0,
\end{array}
\end{eqnarray*}
 which draws the conclusion immediately. 
\end{proof}

\begin{remark}\label{remark-ass-cs}
Regarding Corollary \ref{cor-cs}, we have the following observations.
\begin{itemize}
\item[I)]  One can discern that $\lambda_s >0$ is a weaker condition than RIP \cite{candes2005decoding} associated with an $s$th order RIC $\delta_s:=\delta_s(\A)$ of $\A$ defined as the smallest positive constant $\delta$ such that  $(1-\delta)\|\bx\|^2\leq \|\A\bx\|^2\leq(1+\delta)\|\bx\|^2$
for all $s$-sparse vectors $\bx$. It is easy to check that $\lambda_s \geq  1-\delta_s.$
 Therefore, if $\delta_s<1$ then $\lambda_s>0$. This means if  matrix $\A$ satisfies RIP with RIC $\delta_s<1$, then GPNP converges globally and terminates within finitely many steps. We note that condition $\delta_s<1$ is weaker than those used in   \cite{blumensath2009iterative, zhao2020improved} for IHT,  \cite{foucart2011hard} for HTP,  \cite{dai2009subspace} for SP,  and \cite{needell2009cosamp,foucart2017mathematical, zhao2020improved} for CoSaMP.
 
\item[II)] It is worth mentioning that NHTP proposed in \cite{zhou2021global} also has the global and quadratic convergence properties under assumptions similar to those in Theorem \ref{quadratic-lemma}. However, when it comes to CS problems, the required assumptions indicate $\delta_{2s}<1/3$ and a number of parameters that should be chosen carefully from proper ranges (see \cite[Corollary 11]{zhou2021global}). However,  GPNP needs a weaker assumption (e.g., $\delta_{s}<1$) and its associated parameters can be chosen more flexibly.  
 
\item[III)]Finally, it is known that Newton-type methods for solving the unconstrained strongly quadratic programming (USQP) can terminate in one step if the starting point is close enough to the optimal solution.  From 4) that GPNP terminates within finitely many steps, which is a better result than quadratic convergence property, this can be regarded as an extension of Newton-type methods for solving the USQP. 

\end{itemize}   
\end{remark}

\begin{remark}We have some comments on the computational complexity of GPNP for solving  CS problems. Note that the calculations of $\Pi_{s}(\cdot)$ and the gradient dominate the computation for each step of gradient descent. Their total computational complexity is about $O(mn)$. 
 The worst-case computational complexity of deriving $\bv^{k}$ via (\ref{Newton-descent-property-cs}) is about $O(s^3+ms^2)$. Overall, the entire computational complexity of the $k$th iteration of Algorithm \ref{algorithm 1} is $$O(s^3+ms^2 + q_kmn),$$  
 where $q_k$ is the smallest integer  such that  (\ref{armijio-descent-property}) and $q_k\leq \lfloor \log_{\gamma}(\underline\alpha/\tau) \rfloor$ if $f$ is $2s$-RSS. Such a computational complexity is considerably low due to $s\ll n$ and $m< n$.
\end{remark}
\begin{table}[H]
	\renewcommand{\arraystretch}{1.1}\addtolength{\tabcolsep}{0pt}
	\caption{Convergence rate of different algorithms.}\vspace{-5mm}
	\label{tab:con-rate}
	\begin{center}
		\begin{tabular}{lllllll }
			\hline
			& & \multicolumn{2}{c}{SCO} && \multicolumn{2}{c}{CS}\\\cline{3-4}\cline{6-7}
	Algs. & Ref. & Rate & Assumptions && Rate & Assumptions\\\hline		
	SP&\cite{dai2009subspace}&$--$&$--$&&TFMS&RIP\\
AIHT&\cite{blumensath2012accelerated} &$--$&$--$&&TFMS&RIP\\
NIHT&\cite{blumensath2010normalized} &$--$&$--$&&TFMS&RIP\\
HTP$^\mu$&\cite{foucart2011hard}&$--$&$--$&&TFMS&RIP or $\|A\|^2<1/\mu$\\
FHTP$^\mu$&\cite{foucart2011hard}&$--$&$--$&&TFMS&RIP or $\|A\|^2<1/\mu$\\
GraHTP& \cite{yuan2017gradient}&Linear&$2s$-RSS, $2s$-RSC&&$--$&$--$\\
NHTP&\cite{zhou2021global}&Quadratic&$2s$-RSS, $s$-RSC&&TFMS&$2s$-RSS, $s$-RSC\\
GPNP& &Quadratic&$2s$-RSS, $s$-RSC&&TFMS&$s$-regularity\\\hline
	IRLSLq&\cite{lai2013improved}&$--$&$--$&&Superlinear&RIP\\
SAMP&\cite{do2008sparsity} &$--$&$--$&&TFMS&RIP\\\hline
		\end{tabular}
	\end{center}
\end{table}
\begin{remark}Finally, we summarize the convergence rates of some state-of-the-art algorithms in Table \ref{tab:con-rate}, where we have four types of convergence rates: linear rate, superlinear rate, quadratic rate, and termination within finitely many steps (TFMS). For HTP, there is a step size $\mu>0$. One can find that for general problem (\ref{sco}), quadratic convergence has been established  only for NHTP and GPNP. For CS problems, most of the algorithms can terminate within finitely many steps under RIP conditions.
\end{remark}

\section{Numerical Experiments}\label{sec:num}
{In this section, we conduct  extensive numerical experiments}  to showcase the performance of  GPNP (available at \href{https://github.com/ShenglongZhou/GPNP}{https://github.com/ShenglongZhou/GPNP})   by using MATLAB (R2019a) on a laptop with  $32$GB memory and  2.3Ghz CPU.  Parameters are set as follows: $\bx^{0}=0$ for CS problems and $\bx^{0}=1$ for QCS problems, { where $\bx^{0}=0$  (or $\bx^{0}=1$) means that all elements in $\bx$ are 0 (or 1).} Let $ \tau =5, \sigma=10^{-4},  \gamma=0.5$, $\epsilon=0.01$, and $\varepsilon=10^{-5}$. Besides halting condition $\pi_k\leq\varepsilon$, we also terminate GPNP if the maximum number of iterations ii over 5000.
\subsection{Solving CS problems}
We organize this part as follows. Two testing examples are first described, followed by selections of nine greedy methods and nine relaxation methods. Then extensively numerical comparisons among GPNP and the benchmarked methods are provided.
\subsubsection{Testing examples}\label{sec:exs}
We will solve CS problems with two types of measurement matrices $\A$: the randomly generated data and the 2-dimensional image data. For the former, we consider exact recovery $\bb=\A \bx$, where sensing matrix $\A$  chosen as in \cite{wang2013l1,yin2015minimization,zhou2016null}, while for the latter, we consider inexact recovery $\bb=\A \bx +  \boldsymbol \zeta$, where $\boldsymbol \zeta$ is the noise and $\A$ will be described in the sequel.

\begin{example}[Gaussian matrix]\label{cs-ex} Let $\A\in\mathbb{R}^{m\times n}$  be a random Gaussian matrix  with  entries being identically and independently distributed  (i.i.d.) samples  from standard normal distribution $\mathcal{N}(0,1)$. We then normalize each column to {have  unit length.} Next,  $s$ nonzero components of  `ground  truth' signal $\bx^*$ are also i.i.d. samples from  $\mathcal{N}(0,1)$, and their indices are picked randomly. Finally, the observation vector is given by $\bb=\A \bx^*$. 
\end{example}

\begin{example}[Image data]\label{cs-imag} Some images are naturally non-sparse but can be sparse under some wavelet transforms. Here, we take advantage of the Daubechies wavelet 1, denoted as $\bf W(\cdot)$.  Then the images under this transform, i.e., $\bx^*:=\bf W({\boldsymbol \omega})$, are sparse,  and ${\boldsymbol \omega}$ is the vectorized intensity of an input image. Therefore, the explicit
form of the sampling matrix may not be available. We consider a sampling matrix taking form $\A=\bf F \bf W^{-1}$, where $\bf F$ is the partial fast Fourier transform, and $\bf W^{-1}$ is the inverse of $\bf W$. The observation  vector is given by $\bb=\A \bx^*+  \boldsymbol \zeta$, where noise $\boldsymbol \zeta$ has each element $\zeta_i\sim {\tt nf }\cdot\mathcal{N}(0,1)$ and ${\tt nf }$ is the noise factor which is set as $0.05$ or $0.1$.   We will compute a gray image, see  Figure   $\ref{fig:gray-img}$, with size $256\times256(=65536=n)$ and a color image, see   Figure   $\ref{fig:color-img}$, with size $3\times 256\times256 (=196608=n)$. Both have the sampling size $m=9793$. {The signal to
noise ratio,  $\|\A \bx^*\|/ \|\boldsymbol \zeta\|$, is  $13.53$ when ${\tt nf }=0.05$ and $6.77$ when ${\tt nf }=0.1$ for grey image, and  $8.32$ when ${\tt nf }=0.05$ and $4.16$ when ${\tt nf }=0.1$ for color image.}
\end{example}

To measure the performance of one method, we report the computational time (in seconds), the relative error (ReEr) 
  for Example \ref{cs-ex}, and the
peak signal to noise ratio (PSNR) for Example \ref{cs-imag}, where $${\rm ReEr}:=  {\|\bx-\bx^*\|}\cdot{\|\bx^*\|^{-1}},~~~~{\rm PSNR} :=10\log_{10}\left( {n}{\|\bx-\bx^*\|^{-2}}\right)$$
and $\bx$ is the solution obtained by the method. We say a recovery is successful if {${\rm ReEr}<10^{-4}$.}

\subsubsection{Some insight}
{GPNP has two conditions, namely, \eqref{Newton-switch-on}, for switching on Newton steps. Therefore, we would like to see how many steps at which they are satisfied. To proceed with that, we fix $m=64,n=256$ and $s=5$, or $10$ for Example \ref{cs-ex} and report average results over 200 instances in Figure \ref{fig:effect-eps}, where NoGV, NoSI, NoNP and NoIT represent the number of gradient vanishing (i.e., second condition in \eqref{Newton-switch-on}), the number of support identified (i.e., first condition in \eqref{Newton-switch-on}), the number of Newton pursuit, and the number of iterations, respectively. As expected, the larger $\epsilon$, the more NoGV and NoNP, but the fewer NoSI and NoIT. This is because more Newton steps would accelerate the convergence and hence reduce NoIT. 
}

\begin{figure}[!th]
\begin{subfigure}{.48\textwidth}
	\centering
	\includegraphics[width=1\linewidth]{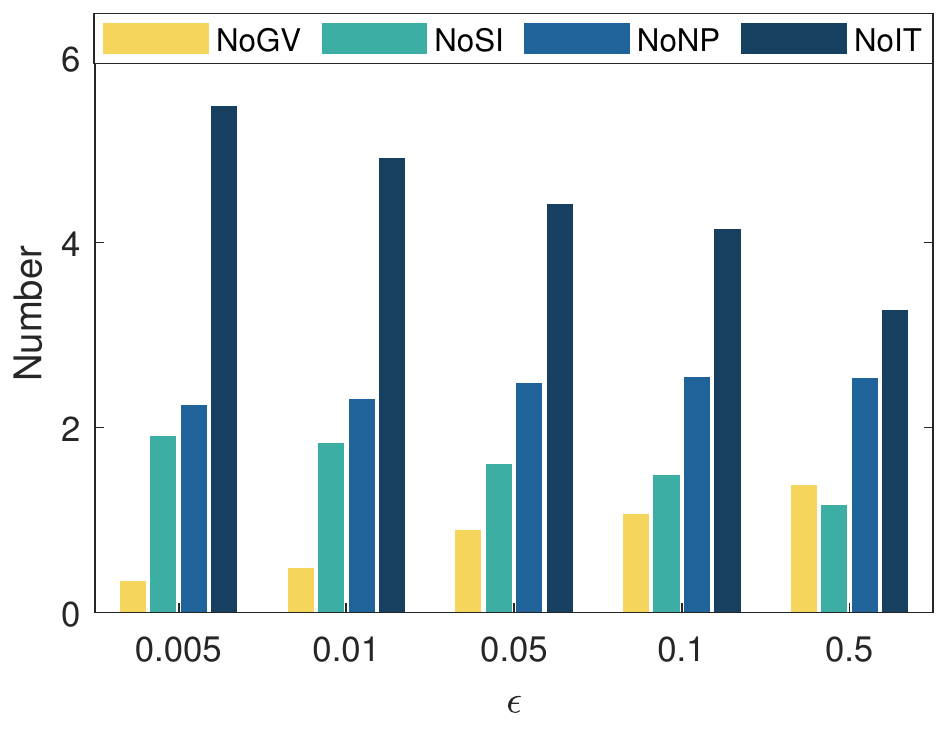}
	\caption{$s=5$.}
	\label{fig:no-5}
\end{subfigure}	 
\begin{subfigure}{.48\textwidth}
	\centering
	\includegraphics[width=1\linewidth]{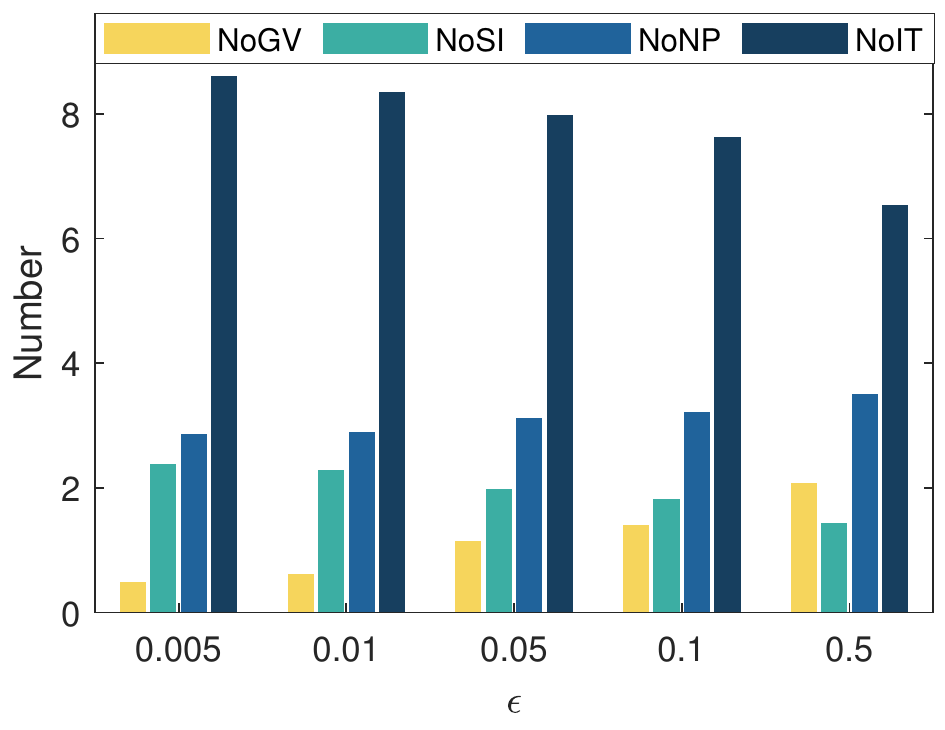}
	\caption{$s=10$.}
	\label{fig:no-10}
\end{subfigure}\vspace{-2mm}
\caption{Effect of  $\epsilon$.\label{fig:effect-eps}}
\end{figure}

\subsubsection{Benchmark methods}
There is a huge number of algorithms that have been proposed for CS problems. Many of them are available at \url{https://sites.google.com/site/igorcarron2/cs#reconstruction} or the authors' homepages.  We select nine greedy  and nine relaxation methods summarized in Table \ref{tab:algs}. To conduct fair  comparisons, the initial points for all methods are set as $\bx^0=0$. Moreover, besides the default halting conditions, to accelerate the termination of all methods, we also set additional ones as follows: stop a method if at $k$th iteration it meets $\|\A\bx^{k+1}-\bb\|<10^{-8}\|\bb\|$ or $k>1000$ for Example  \ref{cs-ex}, and ${\rm std}(f_k,f_{k-1},\ldots ,f_{k-5})<10^{-5}(1+f_k)$ or $k>100$ for Example \ref{cs-imag}. Finally, all other parameters are set as ones described in Table \ref{tab:algs}.

\begin{table}[!th]
	\renewcommand{\arraystretch}{1.1}\addtolength{\tabcolsep}{6pt}
	\caption{Parameters of benchmark methods.}\vspace{-2mm}
	\label{tab:algs}
	\begin{center}
		\begin{tabular}{llllllll }
			\hline
			\multicolumn{3}{c}{Greedy methods}&&\multicolumn{3}{c}{Relaxation methods}\\\cline{1-3}\cline{5-7}
			Algs. & Ref. & Pars.&&Algs. & Ref. & Pars.\\\hline
		GP&\cite{blumensath2008gradient} &stopTol$=s$&	&YALL1&\cite{jfyang2011alternatingdirectionalgorithmsfor}&${\rm tol}=10^{-6}$\\
		SP&\cite{dai2009subspace}&default&&L1Magic&\cite{candes2006robust,candes2006near,candes2006stable}&${\rm tol}=10^{-6}$\\ 
		OMP&\cite{pati1993orthogonal,tropp2007signal} &default&&SPGL1&\cite{van2009probing,van2011sparse}&${\rm tol}=10^{-6}$\\
		AIHT&\cite{blumensath2012accelerated} &${\rm thresh}=10^{-10}$&&IRL1&\cite{candes2008enhancing}&${\rm tol}=10^{-6}$\\
		NIHT&\cite{blumensath2010normalized} &default&&DCL1L2&\cite{yin2015minimization}&$\lambda=10^{-6},\delta=10^{-5}$\\
		{CoSaMP}&\cite{needell2009cosamp} &default&&IRLSLq&\cite{lai2013improved}&$q=0.5,\lambda=10^{-6}$\\
		ALPS&\cite{kyrillidis2011recipes} &default&&AMP&\cite{donoho2009message}&${\rm tol}=10^{-6}$\\
		FHTP&\cite{foucart2011hard}&default&&GPSR&\cite{figueiredo2007gradient}&$\tau=10^{-3}\|\A^\top \bb\|_{\infty}$
		\\
		\multirow{2}{*}{GraSP}&\multirow{2}{*}{\cite{bahmani2013greedy}} &${\rm tolF}=10^{-8}$ &&\multirow{2}{*}{SAMP}&\multirow{2}{*}{\cite{do2008sparsity}}&${\rm step\_size}=5$\\
 & &  ${\rm tolG} = 10^{-4}$&& & &$ \sigma=10^{-6}$\\		
		\hline  
		\end{tabular}
	\end{center}
\end{table}

	It is worth mentioning that SP, CoSaMP, SAMP, and our method GPNP belong to the family of second-order methods that exploit the information of Hessian matrix $\A^\top \A$. Therefore, it is naturally expected that they would produce solutions with higher accuracy. Moreover,  relaxation methods have been proposed to solve the relaxation models of the original CS problems.  For example, methods including YALL1, L1Magic, SPGL1, GPSR, and AMP have been developed to address the $\ell_1$-norm involved relaxations, while the others have been cast to deal with more advanced relaxation models.

\subsubsection{Numerical comparisons}
{\bf a) Greedy methods solving Example \ref{cs-ex}.} To see the performance of greedy methods, we begin with running $500$ independent trials with fixed $n = 256, m = 64$ and recording the corresponding success rates (which is defined by the percentage of the number of successful recoveries over all trials) at sparsity levels $s$ from $5$ to $35$. As shown in  Figure   \ref{fig:succ-s-ex1-greedy}, the bigger $s$ results in lower success rates, namely, harder recoveries. One can observe that GPNP has an outstanding recovery performance. For example, when $s = 25$, it still obtains $95\%$
successful recoveries while the other methods only guaranteed less than $15\%$ successful ones. The improvement is significant. 

We then fix  $n = 256, s = 13$ and alter $m$ from $\lfloor0.08n\rfloor$ to $\lfloor0.34n\rfloor$. Success rates of over $500$ trials are reported in   Figure   \ref{fig:succ-m-ex1-greedy}. The ascending trends of all lines demonstrate that the recoveries become easier when more and more samples are available, namely, when $m$ is rising. Once again, GPNP outperforms the others, achieving $75\%$ successful recoveries when $m/n=0.14$. By contrast, the best success rate among the other methods is $15\%$ from OMP when $m/n=0.14$. 

\begin{figure}[!th]
\begin{subfigure}{.48\textwidth}
	\centering
	\includegraphics[width=1\linewidth]{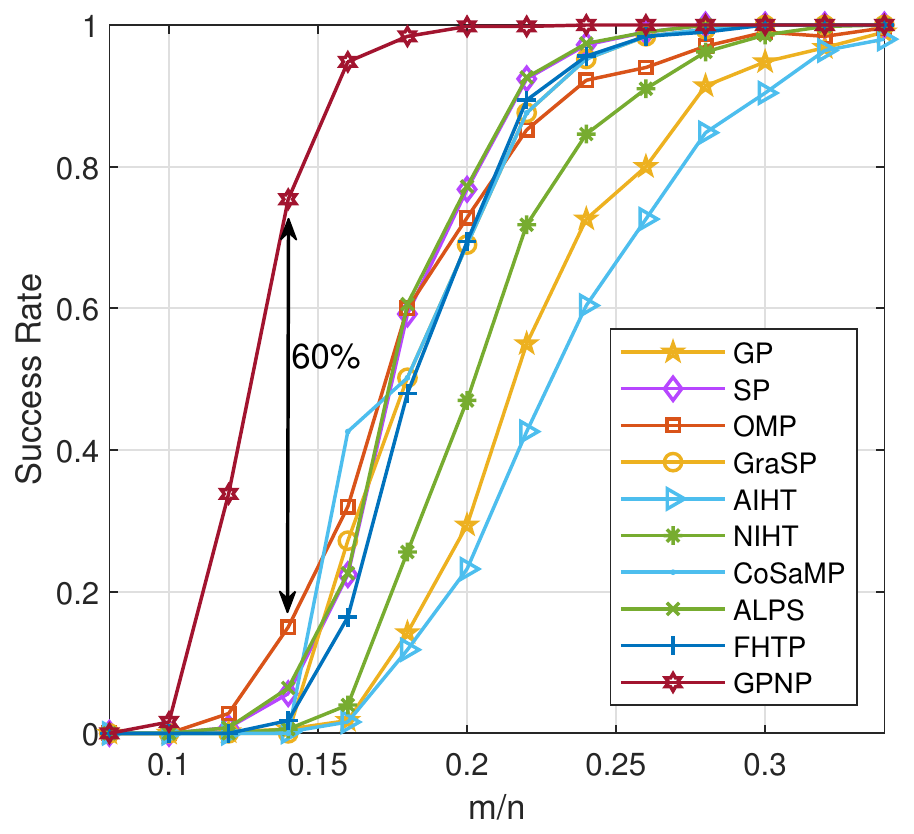}
	\caption{GPNP v.s. greedy methods.}
	\label{fig:succ-m-ex1-greedy}
\end{subfigure}	 
\begin{subfigure}{.48\textwidth}
	\centering
	\includegraphics[width=1\linewidth]{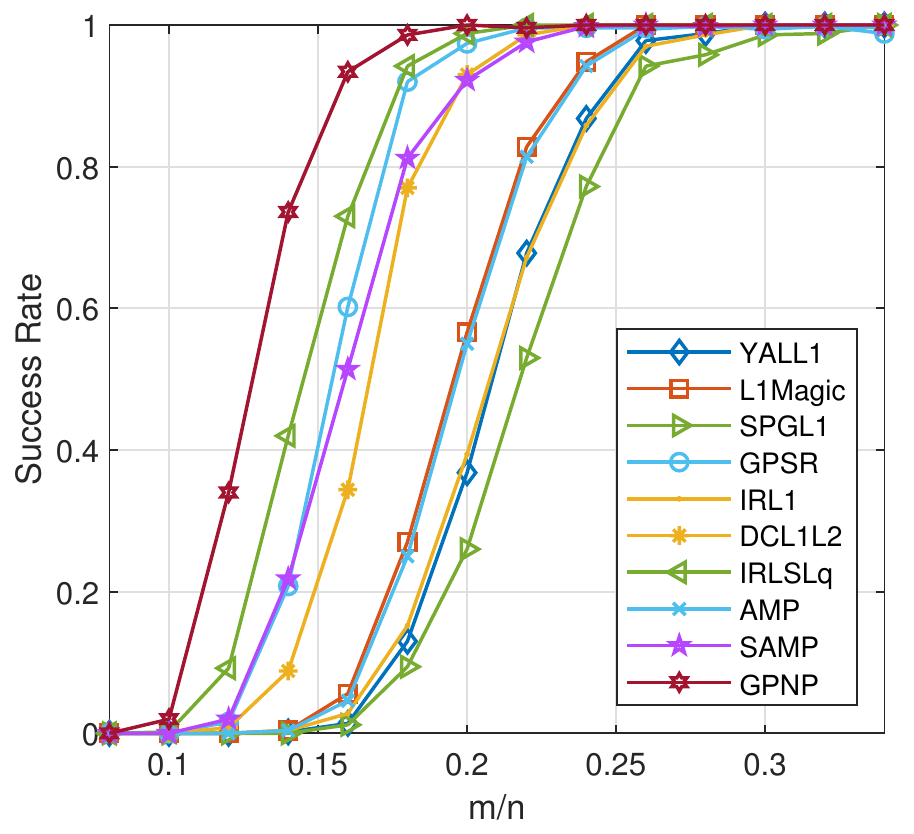}
	\caption{GPNP v.s. relaxation methods.}
	\label{fig:succ-m-ex1-non-greedy}
\end{subfigure}
\caption{Success rate v.s. sample size $m$ for Example \ref{cs-ex}.\label{fig:succ-m-ex1}}
\end{figure}
 
{\bf b) Relaxation methods  solving Example \ref{cs-ex}.} Next, we compare GPNP with nine relaxation methods. The results presented in  Figure   \ref{fig:succ-s-ex1-non-greedy} and  Figure  \ref{fig:succ-m-ex1-non-greedy} show that  GPNP outperforms these nine non-greedy methods regardless of varying $s$ or $m$.  However, in comparison with the gaps in  Figure   \ref{fig:succ-s-ex1-greedy}, the improvements in  Figure   \ref{fig:succ-s-ex1-non-greedy} are not very significant. This is because the non-greedy methods are known to have higher performance than greedy methods in terms of successful recovery rates. From  Figure    \ref{fig:succ-s-ex1-non-greedy} and  Figure  \ref{fig:succ-m-ex1-non-greedy}, one can conclude that methods (like IRL1, DCL1L2, IRLSLq and SAMP) based on more advanced relaxation models generate higher success rates than those (including YALL1, L1Magic, SPGL1, GPSR and AMP) based on the $\ell_1$-norm involved relaxation models.

{\bf c) All methods solving Example \ref{cs-ex} with higher dimensions.} Now, we would like to see the behaviours of all methods for solving CS problems with higher dimensional data. Average results over 20 trials are recorded in Table \ref{tab:cs-ind}, where $n$ is chosen from $\{10000,20000,30000\}$, $m=0.25\%n$ and $s=5\%n$. It can be clearly seen that the second-order methods SP, CoSaMP, SAMP and   GPNP obtain very tiny ReEr, namely,  much more accurate recoveries. When it comes to the computational speed, GPNP runs considerably fast, consuming 2.197 seconds when $n=30000$. In general, greedy methods run faster than relaxation ones.

{\bf d) Comparisons for solving Example \ref{cs-imag}.} This part fixates  CS problems with image data. As demonstrated above, relaxation methods run slowly in higher dimensional settings. Therefore, we will not include them in the subsequent comparisons. Moreover, the current implementations of OMP and GraSP are difficult to modify to solving  Example \ref{cs-imag} since measurement $\A$ is a function mapping instead of a  matrix. Hence, we exclude them in the following experiments as well. 

\begin{table}[!th]
	\renewcommand{\arraystretch}{1.05}\addtolength{\tabcolsep}{9.5pt}
	\caption{Effect of the bigger values of $n$ for Example \ref{cs-ex}.}\vspace{-2mm}
	\label{tab:cs-ind}
	\begin{center}
		\begin{tabular}{lccccccccc}
			\hline
 &\multicolumn{3}{c}{ReEr}&&\multicolumn{3}{c}{Time(seconds)}\\\cline{2-4}\cline{6-8}
${\rm Algs.}\setminus n$	&	$10000$	&	$20000$	&	$30000$	&	&	$10000$	&	$20000$	&	$30000$	\\\hline
GP	&	2.03e-04	&	8.93e-05	&	7.83e-05	&	&	14.88 	&	101.8 	&	340.6 	\\
SP	&	1.09e-15	&	1.57e-15	&	1.86e-15	&	&	0.311 	&	1.417	&	3.846	\\
OMP	&	1.39e-04	&	7.15e-05	&	6.61e-05	&	&	9.491 	&	59.48 	&	196.6 	\\
GraSP	&	1.87e-15	&	2.36e-15	&	8.65e-15	&	&	1.745 	&	7.844 	&	21.67 	\\
AIHT	&	8.86e-09	&	9.18e-09	&	1.04e-08	&	&	1.851 	&	7.194 	&	16.26 	\\
NIHT	&	3.13e-08	&	3.16e-08	&	3.09e-08	&	&	2.407 	&	10.34 	&	22.76 	\\
CoSaMP	&	4.98e-15	&	7.01e-15	&	8.64e-15	&	&	 0.435  	&	2.144 	&	5.857 	\\
ALPS	&	9.59e-09	&	1.28e-08	&	1.11e-08	&	&	2.214 	&	14.57 	&	52.80 	\\
FHTP	&	8.93e-09	&	1.13e-08	&	9.90e-09	&	&	1.408 	&	4.616 	&	10.69 	\\\hline
GPNP	&	1.23e-15	&	1.72e-15	&	1.99e-15	&	&	{ 0.228} 	&	{ 0.923} 	&	{  2.197} 	\\\hline
YALL1	&	3.42e-06	&	3.58e-06	&	3.68e-06	&	&	3.858 	&	18.43 	&	41.17 	\\
L1Magic	&	1.12e-05	&	1.74e-05	&	1.85e-05	&	&	31.96 	&	208.2 	&	615.0 	\\
SPGL1	&	9.68e-04	&	2.62e-05	&	1.39e-05	&	&	11.23 	&	46.23 	&	96.89 	\\
GPSR	&	7.14e-03	&	7.15e-03	&	7.57e-03	&	&	29.95 	&	139.7 	&	257.9 	\\
IRL1	&	9.21e-06	&	1.07e-05	&	1.03e-05	&	&	10.24 	&	44.54 	&	98.83 	\\
DCL1L2	&	3.11e-05	&	3.14e-05	&	3.29e-05	&	&	7.353 	&	30.88 	&	72.30 	\\
IRLSLq	&	4.91e-06	&	2.01e-05	&	2.77e-05	&	&	8.547 	&	57.18 	&	185.4 	\\
AMP	&	1.88e-06	&	1.92e-06	&	1.96e-06	&	&	4.073 	&	19.74 	&	40.35 	\\
SAMP	&	3.23e-15	&	4.05e-15	&	4.62e-15	&	&	38.47 	&	461.8 	&	2318 	\\
			\hline
		\end{tabular}
	\end{center}
\end{table}

\begin{table}[!th]
	\renewcommand{\arraystretch}{1.05}\addtolength{\tabcolsep}{-1pt}
	\caption{Results for Example \ref{cs-imag}.}\vspace{-2mm}
	\label{tab:ex3-cs}
	\begin{center}
		\begin{tabular}{lccccccccccccccc}
			\hline
&\multicolumn{7}{c}{Gray image}&&\multicolumn{7}{c}{Color image}	\\\cline{2-8}\cline{10-16}		
 &\multicolumn{3}{c}{PSNR}&&\multicolumn{3}{c}{Time(seconds)}
&&\multicolumn{3}{c}{PSNR}&&\multicolumn{3}{c}{Time(seconds)}\\
$s$	&	1000 	&	1500 	&	2000 	&	&	1000 	&	1500 	&	2000 	&	&	1000 	&	1500 	&	2000 	&	&	1000 	&	1500 	&	2000 	\\\cline{2-4}\cline{6-8}\cline{10-12}\cline{14-16}
& \multicolumn{15}{c}{${\tt nf}=0.05$}\\\hline
GP	&	19.31 	&	18.87 	&	18.20 	&	&	20.73 	&	31.14 	&	41.00 	&	&	27.34 	&	25.89 	&	24.46 	&	&	59.12 	&	88.49 	&	118.3 	\\
SP	&	19.19 	&	18.72 	&	18.08 	&	&	11.26 	&	10.33 	&	30.29 	&	&	26.32 	&	24.98 	&	24.07 	&	&	20.67 	&	25.22 	&	60.95 	\\
AIHT	&	20.15 	&	20.42 	&	20.18 	&	&	0.711 	&	1.776 	&	2.359 	&	&	29.25 	&	29.02 	&	27.83 	&	&	2.501 	&	4.851 	&	5.167 	\\
NIHT	&	20.00 	&	20.28 	&	19.92 	&	&	2.336 	&	2.640 	&	4.986 	&	&	29.01 	&	28.56 	&	27.43 	&	&	6.691 	&	6.262 	&	13.33 	\\
CoSaMP	&	19.24 	&	18.14 	&	17.62 	&	&	21.75 	&	23.92 	&	23.83 	&	&	25.72 	&	24.12 	&	23.21 	&	&	50.41 	&	69.80 	&	69.51 	\\
ALPS	&	19.21 	&	18.72 	&	18.18 	&	&	5.377 	&	13.93 	&	23.38 	&	&	26.37 	&	24.80 	&	24.20 	&	&	14.92 	&	36.56 	&	52.62 	\\
FHTP	&	20.15 	&	20.35 	&	20.12 	&	&	0.811 	&	1.332 	&	1.534 	&	&	29.24 	&	29.14 	&	27.93 	&	&	2.463 	&	3.927 	&	6.099 	\\
GPNP	&	20.16 	&	20.54 	&	20.32 	&	&	0.738 	&	1.629 	&	1.498 	&	&	29.40 	&	29.66 	&	28.65 	&	&	1.937 	&	2.934 	&	5.081 	\\\cline{2-16}
& \multicolumn{15}{c}{${\tt nf}=0.10$}\\\hline
GP	&	18.45 	&	17.48 	&	16.41 	&	&	20.86 	&	31.72 	&	43.15 	&	&	23.80 	&	21.25 	&	19.67 	&	&	58.41 	&	87.58 	&	115.1 	\\
SP	&	17.68 	&	16.59 	&	15.77 	&	&	7.434 	&	12.57 	&	23.25 	&	&	21.47 	&	19.81 	&	19.06 	&	&	19.15 	&	53.79 	&	50.79 	\\
AIHT	&	19.70 	&	19.45 	&	18.79 	&	&	0.766 	&	1.339 	&	1.864 	&	&	26.80 	&	24.66 	&	22.84 	&	&	3.197 	&	4.378 	&	5.695 	\\
NIHT	&	19.61 	&	19.10 	&	18.75 	&	&	1.502 	&	2.877 	&	3.308 	&	&	25.98 	&	23.91 	&	22.39 	&	&	5.295 	&	8.506 	&	11.80 	\\
CoSaMP	&	17.50 	&	16.07 	&	15.25 	&	&	11.74 	&	24.20 	&	24.42 	&	&	20.64 	&	19.09 	&	18.36 	&	&	34.80 	&	70.87 	&	69.17 	\\
ALPS	&	17.71 	&	16.57 	&	15.86 	&	&	7.245 	&	14.96 	&	22.25 	&	&	21.42 	&	20.00 	&	19.11 	&	&	19.95 	&	34.66 	&	51.21 	\\
FHTP	&	19.71 	&	19.64 	&	18.93 	&	&	0.981 	&	0.951 	&	1.489 	&	&	26.89 	&	24.93 	&	23.10 	&	&	2.882 	&	3.916 	&	4.638 	\\
GPNP	&	19.79 	&	19.87 	&	19.21 	&	&	0.640 	&	0.821 	&	1.602 	&	&	27.41 	&	25.80 	&	23.89 	&	&	2.323 	&	3.062 	&	6.471 	\\

\hline

		\end{tabular}
	\end{center}
\end{table}

Regarding the recovery accuracies, GPNP obtains the highest PSNR for all cases, which means it renders the most accurate recoveries, see  Figure  \ref{fig:gray-img} and Figure \ref{fig:color-img} as well  the data in Table \ref{tab:ex3-cs}.   It is well known that the first-order greedy methods have extremely high computational speed. Nevertheless,  GPNP runs the fastest for most scenarios, displaying its strong ability against other first-order methods.  We note that it runs much faster than the other two second-order methods SP and CoSaMP.

\begin{figure}[!th]
\includegraphics[width=1\textwidth]{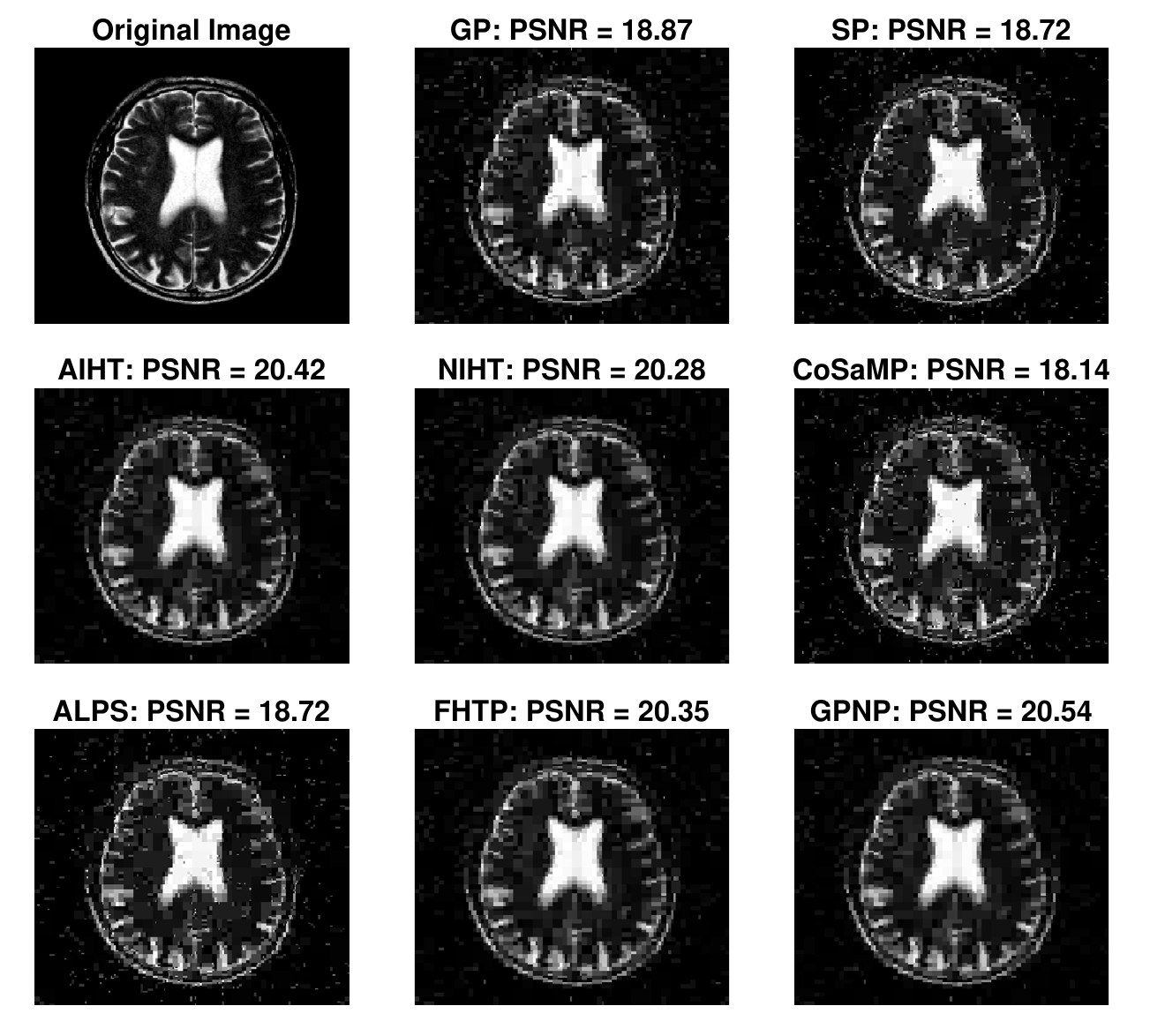} 
\vspace{-4mm}\caption{Recovery for Example \ref{cs-imag} with gray image data, where ${\tt nf}=0.05, s=1500$.\label{fig:gray-img}}
\end{figure}

\begin{figure}[!th]
\includegraphics[width=1\textwidth]{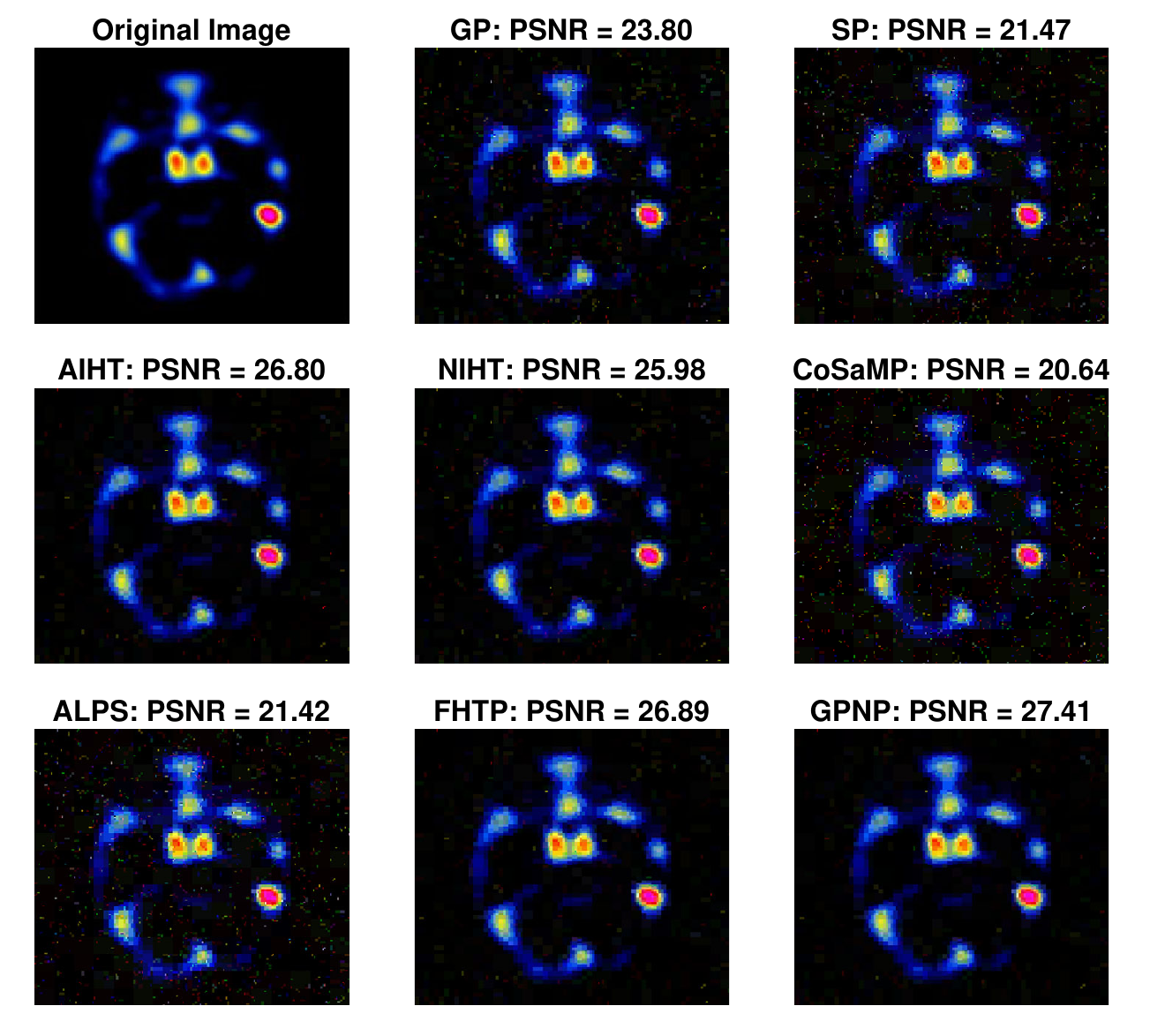} 
\vspace{-4mm}\caption{Recovery for Example \ref{cs-imag} with color image data, where ${\tt nf}=0.10, s=1000$.\label{fig:color-img}}
\end{figure}

\subsection{Solving QCS problems}
Quadratic compressive sensing (QCS) has been studied by \cite{shechtman2011sparsity,Beck13} and successfully applied into phase retrieval problems \cite{shechtman2014gespar}. The corresponding objective function in \eqref{sco} is the following quartic function,
$$\begin{array}{l}
f_{qcs}(\bx):=\frac{1}{4m}\sum_{i\in[m]}\left(\langle \bx, \A_i \bx\rangle -b_i \right)^2,
\end{array}$$
where $\A_i\in\R^{n\times n}, i\in[m]$ and $\bb\in\R^m$. For such an example, similar to  \cite{Beck13}, we consider $\A_i=\ba_i\ba_i^\top$ with $\ba_i\in\R^n$ and $a_{ij}\sim \mathcal{N}(0,1), i\in[m],j\in[n]$. The `ground truth' $s$-sparse solution $\bx^*$ is generated the same as Example \ref{cs-ex} and  $\bb$ is obtained by $b_i=\langle \ba_i, \bx^*\rangle^2,i\in[m]$.  We compare GPNP with two methods RSS and GSS proposed in \cite{Beck13}.  Firstly, by fixing $m=80, n=120$ and altering $s$ from $\{3,4,\ldots,15\}$, we run 100 trials and report the number of successful recoveries. Results   in Table \ref{tab:ex4-qcs} show the better recovery ability of GPNP than the other two methods. We next compare them for solving QCS with higher dimensions. For each $n$ form $\{1000,2000,\ldots,5000\}$, we set $m=0.8n$ and $s=0.01n$ and record the average results over 20 trials (Here, only results of successful recoveries are reported). It can be clearly seen from Table \ref{tab:ex4-qcs-1} that GPNP runs the fastest and delivers the smallest objective function values.

\begin{table}[!th]
	\renewcommand{\arraystretch}{1.0}\addtolength{\tabcolsep}{4pt}
	\caption{The number of successful recoveries.}\vspace{-2mm}
	\label{tab:ex4-qcs}
	\begin{center}
		\begin{tabular}{lccccccccccccc}
			\hline
$s$	&	3 	&	4 	&	5 	&	6 	&	7 	&	8 	&	9 	&	10 	&	11 	&	12 	&	13 	&	14 	&	15 	\\\hline
PSS	&	42 	&	46 	&	32 	&	38 	&	37 	&	29 	&	20 	&	26 	&	13 	&	14 	&	11 	&	8 	&	0 	\\
GSS	&	90 	&	95 	&	94 	&	86 	&	68 	&	71 	&	59 	&	45 	&	38 	&	34 	&	20 	&	19 	&	8 	\\
GPNP	&	93 	&	98 	&	98 	&	100 	&	100 	&	100 	&	98 	&	100 	&	96 	&	99 	&	91 	&	86 	&	70 	\\

\hline
		\end{tabular}
	\end{center}
\end{table}

\begin{table}[!th]
	\renewcommand{\arraystretch}{1.0}\addtolength{\tabcolsep}{4pt}
	\caption{Effect of the bigger values of $n$ for QCS problems.}\vspace{-2mm}
	\label{tab:ex4-qcs-1}
	\begin{center}
		\begin{tabular}{l ccccccccc}
			\hline
&\multicolumn{4}{c}{$f_{qcs}$}&&\multicolumn{4}{c}{Time(in seconds)}\\\cline{2-5}\cline{7-10}
$n$	&	1000 	&	2000 	&	3000 	&	4000 	&&	1000 	&	2000 	&	3000 	&	4000 	\\\hline
PSS	&	1.12e-12	&	2.88e-12	&	2.80e-12	&	3.00e-12	&&	2.450 	&	19.38	&	56.40 	&	135.0 	\\
GSS	&	8.98e-13	&	2.53e-12	&	2.84e-12	&	2.94e-12	&&	8.951 	&	157.0 	&	629.4 	&	2227 	\\
GPNP	&	2.74e-18	&	1.02e-16	&	1.40e-16	&	1.91e-16	&&	0.004 	&	0.101 	&	0.258 	&	0.933 	\\
\hline
		\end{tabular}
	\end{center}
\end{table}

\section{Conclusion}\label{sec:conclusion}
The proposed algorithm combines the hard-thresholding operator and  Newton pursuit, leading to a low computational complexity and fast convergence speed. Its high performance has been demonstrated by extensive numerical experiments for solving compressive sensing and quadratic compressive sensing problems in comparison with a number of excellent solvers.  To establish the global and quadratic convergence properties, we made use of the popular regularities of the objective function. The proofs were quite standard and can be regarded as the extension of Newton-type methods for solving unconstrained optimization problems.  We feel that the techniques used to cast the algorithm and to do the convergence analysis might be helpful for dealing with the sparsity constrained optimization with equalities or inequalities constraints, which is left for future research.





\bibliographystyle{elsarticle-num}
\bibliography{references}







\end{document}